\documentclass{article}%
\usepackage{amsfonts}
\usepackage{amssymb}
\usepackage{graphicx}
\usepackage{amsmath}
\usepackage{amscd}%
\setcounter{MaxMatrixCols}{30}
%TCIDATA{OutputFilter=latex2.dll}
%TCIDATA{Version=5.00.0.2606}
%TCIDATA{CSTFile=LaTeX article (bright).cst}
%TCIDATA{Created=Thu Jan 29 12:26:47 2009}
%TCIDATA{LastRevised=Saturday, September 06, 2014 07:14:40}
%TCIDATA{<META NAME="GraphicsSave" CONTENT="32">}
%TCIDATA{<META NAME="SaveForMode" CONTENT="1">}
%TCIDATA{BibliographyScheme=Manual}
%TCIDATA{<META NAME="DocumentShell" CONTENT="General\Blank Document">}
%TCIDATA{Language=American English}
%TCIDATA{ComputeDefs=
%$x$
%}
\newtheorem{theorem}{Theorem}

\newtheorem{corollary}[theorem]{Corollary}

\newtheorem{definition}[theorem]{Definition}

\newtheorem{lemma}[theorem]{Lemma}

\newtheorem{proposition}[theorem]{Proposition}

\newenvironment{proof}[1][Proof]{\textbf{#1.} }{\ \rule{0.5em}{0.5em}}
\begin{document}

\title{Binary matroids and local complementation}
\author{Lorenzo Traldi\\Lafayette College\\Easton, Pennsylvania 18042 USA}
\date{}
\maketitle

\begin{abstract}
We introduce a binary matroid $M[IAS(G)]$ associated with a looped simple
graph $G$. $M[IAS(G)]$ classifies $G$ up to local equivalence, and determines
the delta-matroid and isotropic system associated with $G$. Moreover, a
parametrized form of its Tutte polynomial yields the interlace polynomials of
$G$.

\bigskip

Keywords. delta-matroid, graph, interlace polynomial, isotropic system, local
complementation, matroid, multimatroid, pivot, Tutte polynomial

\bigskip

Mathematics Subject\ Classification. 05B35, 05C31

\end{abstract}

\section{Introduction}

A \emph{graph }$G=(V(G),E(G))$ consists of a finite vertex-set $V(G)$ and a
finite edge-set $E(G)$. Each edge is incident on one or two vertices; an edge
incident on only one vertex is a \emph{loop}. The two vertices incident on a
non-loop edge are \emph{neighbors}, and the \emph{open neighborhood} of a
vertex $v$ is $N(v)=\{$neighbors of $v\}$. A graph in which different edges
can be distinguished by their vertex-incidences is a \emph{looped simple
graph}, and a \emph{simple graph} is a looped simple graph with no loop.

In this paper we are concerned with properties of looped simple graphs
motivated by two sets of ideas. The first set of ideas is the theory of the
principal pivot transform (PPT) over $GF(2)$. PPT over arbitrary fields was
introduced more than 50 years ago by Tucker \cite{Tu}; see also the survey of
Tsatsomeros \cite{Ts}. According to Geelen \cite{G}, PPT transformations
applied to the mod-2 adjacency matrices of looped simple graphs are generated
by two kinds of \emph{elementary} PPT operations, \emph{non-simple local
complementations} with respect to looped vertices and \emph{edge pivots} with
respect to edges connecting unlooped vertices. The second set of ideas is the
theory of 4-regular graphs and their Euler circuits, initiated more than 40
years ago by Kotzig \cite{K}. Kotzig proved that all the Euler circuits of a
4-regular graph are obtained from any one using $\kappa$%
\emph{-transformations}. If a 4-regular graph is directed in such a way that
every vertex has indegree 2 and outdegree 2, then Kotzig \cite{K}, Pevzner
\cite{Pev} and Ukkonen \cite{U} showed that all of the graph's directed Euler
circuits are obtained from any one through certain combinations of $\kappa
$-transformations called \emph{transpositions} by Arratia, Bollob\'{a}s and
Sorkin \cite{A1, A2, A}. Bouchet \cite{Bold} and Rosenstiehl and Read
\cite{RR} introduced a simple graph associated with any\ Euler circuit of a
connected 4-regular graph, the \emph{alternance} graph or \emph{interlacement}
graph; an equivalent \emph{link relation matrix} was defined by\ Cohn and
Lempel \cite{CL} in the context of the theory of permutations. These authors
showed that the effects of $\kappa$-transformations and transpositions on
interlacement graphs are given by \emph{simple} local complementations and
edge pivots, respectively.

In the late 1980s, Bouchet introduced two new kinds of combinatorial
structures associated with these two theories. On the one hand are the
\emph{delta-matroids }\cite{Bdm}, some of which are associated with looped
simple graphs. The fundamental operation of delta-matroid theory is a way of
changing one delta-matroid into another, called \emph{twisting}. Two looped
simple graphs are related through PPT operations if and only if their
associated delta-matroids are related through twisting. On the other hand are
the \emph{isotropic systems} \cite{Bi1, Bi2}, all of which are associated with
\emph{fundamental graphs}. Two isotropic systems are \emph{strongly
isomorphic} if and only if they share fundamental graphs. Moreover, two simple
graphs are related through simple local complementations if and only if they
are fundamental graphs of strongly isomorphic isotropic systems. Properties of
isotropic systems were featured in the proof of Bouchet's famous
\textquotedblleft forbidden minors\textquotedblright\ characterization of
circle graphs \cite{Bco}.

The purpose of this paper is to introduce a binary matroid constructed in a
natural way from the adjacency matrix of a looped simple graph $G$; we call it
the \emph{isotropic matroid of} $G$, in honor of Bouchet's isotropic systems.
Let $G$ be a looped simple graph with adjacency matrix $A(G)$. That is, $A(G)$
is the $\left\vert V(G)\right\vert \times\left\vert V(G)\right\vert $ matrix
with entries in $GF(2)$ given by: a diagonal entry is $1$ if and only if the
corresponding vertex is looped, and an off-diagonal entry is $1$ if and only
if the corresponding vertices are adjacent. Let $IAS(G)$ denote the
$\left\vert V(G)\right\vert \times(3\left\vert V(G)\right\vert )$ matrix%
\[
IAS(G)=(I\mid A(G)\mid I+A(G)).
\]

\begin{definition}
The isotropic matroid of $G$ is the binary matroid $M[IAS(G)]$ represented by
$IAS(G)$.
\end{definition}

Let $W(G)$ denote the ground set of $M[IAS(G)]$, i.e., the set of columns of
$IAS(G)$. If $v\in V(G)$ then there are three columns of $IAS(G)$
corresponding to $v$: one in $I$, one in $A(G)$, and one in $I+A(G)$. For
notational convenience, and to indicate the connection with our work on
interlace polynomials \cite{T6, T7, Tra}, we use $v_{\phi}$ to denote the
column of $I$ corresponding to $v$, $v_{\chi}$ to denote the column of $A(G)$
corresponding to $v$, and $v_{\psi}$ to denote the column of $I+A(G)$
corresponding to $v$. The set $\{v_{\phi},v_{\chi},v_{\psi}\}$ is the
\emph{vertex triple} corresponding to $v$.

Notice that if $G_{2}$ is obtained from $G_{1}$ by loop complementation at a
vertex $v$ then there is an isomorphism between the isotropic matroids
$M[IAS(G_{1})]$ and $M[IAS(G_{2})]$ that simply interchanges the $v_{\chi}$
and $v_{\psi}$ elements of $W(G_{1})$ and $W(G_{2})$. We say isomorphisms like
this, which map vertex triples to vertex triples, are \emph{compatible with
the partitions of} $W(G_{1})$ \emph{and} $W(G_{2})$ \emph{into vertex
triples}, or simply \emph{compatible}. In Section 4 we observe that edge
pivots and local complementations also induce compatible isomorphisms of
isotropic matroids. Moreover, every compatible isomorphism is induced by some
sequence of edge pivots, local complementations and loop complementations. It
follows that compatible isomorphisms of\ isotropic matroids classify simple
graphs and looped simple graphs under various combinations of these
operations. For instance:

\begin{theorem}
\label{lcintro2}Let $G_{1}$ and $G_{2}$ be simple graphs. Then the following
conditions are equivalent:

\begin{enumerate}
\item Up to isomorphism, $G_{2}$ can be obtained from $G_{1}$ using simple
local complementations.

\item There is a compatible isomorphism $M[IAS(G_{1})]\cong M[IAS(G_{2})]$.
\end{enumerate}
\end{theorem}

\begin{theorem}
\label{lcintro}Let $G_{1}$ and $G_{2}$ be looped simple graphs. Then the
following conditions are equivalent:

\begin{enumerate}
\item Up to isomorphism, $G_{2}$ can be obtained from $G_{1}$ using local
complementations and loop complementations.

\item There is a compatible isomorphism $M[IAS(G_{1})]\cong M[IAS(G_{2})]$.
\end{enumerate}
\end{theorem}

\begin{theorem}
\label{pptintro}Let $G_{1}$ and $G_{2}$ be simple graphs. Then the following
conditions are equivalent:

\begin{enumerate}
\item Up to isomorphism, $G_{2}$ can be obtained from $G_{1}$ using edge pivots.

\item There is a compatible isomorphism $M[IAS(G_{1})]\cong M[IAS(G_{2})]$,
which maps $\psi$ elements to $\psi$ elements.
\end{enumerate}
\end{theorem}

\begin{theorem}
\label{pptintro2}Let $G_{1}$ and $G_{2}$ be looped simple graphs. Then the
following conditions are equivalent:

\begin{enumerate}
\item Up to isomorphism, $G_{2}$ can be obtained from $G_{1}$ using PPT operations.

\item There is a compatible isomorphism $M[IAS(G_{1})]\cong M[IAS(G_{2})]$,
which maps $\psi$ elements to $\psi$ elements.
\end{enumerate}
\end{theorem}

These results raise a natural question: what is the significance of
non-compatible isomorphisms between isotropic matroids? This question is
answered in Section 5, where we show that an arbitrary isomorphism between
isotropic matroids must yield a compatible isomorphism. We deduce the
following strengthenings of Theorems \ref{lcintro2} and \ref{lcintro}:

\begin{theorem}
\label{lcintro4}Let $G_{1}$ and $G_{2}$ be simple graphs. Then $M[IAS(G_{1}%
)]\cong M[IAS(G_{2})]$ if and only if up to isomorphism, $G_{2}$ can be
obtained from $G_{1}$ using simple local complementations.
\end{theorem}

\begin{theorem}
\label{lcintro3}Let $G_{1}$ and $G_{2}$ be looped simple graphs. Then
$M[IAS(G_{1})]\cong M[IAS(G_{2})]$ if and only if up to isomorphism, $G_{2}$
can be obtained from $G_{1}$ using local complementations and loop complementations.
\end{theorem}

Theorems \ref{lcintro2} -- \ref{lcintro3} tell us indirectly that the
isotropic matroid of a looped simple graph determines the graph's isotropic
system and the twist class of the graph's delta-matroid. In Section 6 we show
how to obtain the delta-matroid and isotropic system of $G$ directly from
$M[IAS(G)]$.

In Section 7 we discuss some fundamental properties of isotropic matroids. For
instance, $M[IAS(G)]$ is a connected matroid if and only if $G$ is a connected
graph with at least two vertices, and $M[IAS(G)]$ is a regular matroid if and
only if no connected component of $G$ contains more than two vertices.

In the last section we show that $M[IAS(G)]$ has another interesting property:
appropriately parametrized Tutte polynomials of isotropic matroids yield the
interlace polynomials introduced by Arratia, Bollob\'{a}s and Sorkin \cite{A1,
A2, A}, and also the modified versions subsequently defined by\ Aigner and van
der Holst \cite{AH}, Courcelle \cite{C} and the author \cite{T6}.

The ideas in this paper came to mind after the resemblance between the
matrices appearing in Aigner and van der Holst's discussion of interlace
polynomials \cite{AH} and our nonsymmetric approach to interlacement in
4-regular graphs \cite{T7} was pointed out to us by Robert Brijder. We are
grateful to him for years of informative correspondence regarding
delta-matroids, isotropic systems, PPT and related combinatorial notions. We
are also grateful to two anonymous readers, whose advice improved the original
version of the paper.

\section{Standard representations of binary matroids}

We do not review general results and terminology of graph theory and matroid
theory here; instead we refer the reader to standard texts in the field,
\cite{GM, O, We, W1} for instance. All the matroids we consider in this paper
are \emph{binary}:

\begin{definition}
Let $S$ be a finite set. A \emph{binary matroid} $M$ on $S$ is represented by
a matrix with entries in $GF(2)$, whose columns are indexed by the elements of
$S$. A subset of $S$ is dependent in $M$ if and only if the corresponding
columns of the matrix are linearly dependent.
\end{definition}

The binary matroid represented by a matrix is not changed if one row is added
to another, or the rows are permuted, or a row of zeroes is adjoined or
removed. Also, permuting the columns of a matrix will yield a new matrix that
represents an isomorphic binary matroid. Familiar results of elementary linear
algebra tell us that consequently, every binary matroid has a representation
of the following type:

\begin{definition}
Let $I$ be an $r\times r$ identity matrix. A \emph{standard representation }of
a rank-$r$ binary matroid $M$ is a matrix of the form $(I\mid A)$ that
represents $M$.
\end{definition}

If $A$ is a matrix with entries in $GF(2)$ then $M[IA]$ denotes the matroid
with standard representation $(I\mid A)$.

Recall that if $B$ is a basis of a matroid $M$, and $x$ is an element of $M$
not included in $B$, then the \emph{fundamental circuit} of $x$ with respect
to $B$ is%
\[
C(x,B)=\{x\}\cup\{b\in B\mid B\Delta\{b,x\}\text{ is a basis of }M\}\text{,}%
\]
where $\Delta$ denotes the symmetric difference. $C(x,B)$ is the unique
circuit contained in $B\cup\{x\}$.

A peculiar property of binary matroids is that the fundamental circuits with
respect to any one basis contain enough information to determine a binary
matroid. The same is not true for general matroids; for instance a matroid on
$\{1,2,3,4\}$ with basis $\{1,2\}$ and fundamental circuits $\{1,2,3\}$ and
$\{1,2,4\}$ might be either $U_{2,4}$ or the circuit matroid of a triangle
with one doubled edge. ($U_{2,4}$ is not binary, of course.) Notice that in
essence, a standard representation $(I\mid A)$ is this kind of description:
the matroid elements corresponding to the columns of $I$ constitute a basis
$B$, and for each element $x\notin B$, the fundamental circuit $C(x,B)$
includes $x$ together with the elements of $B$ corresponding to nonzero
entries of the $x$ column of $A$.

The only part of this section that does not appear in the textbooks mentioned
above is the following simple theorem, which tells us how the various standard
representations of a binary matroid are related to each other.

\begin{theorem}
\label{fund}Let $A_{1}$ and $A_{2}$ be $r\times(n-r)$ matrices with entries in
$GF(2)$. Then $M[IA_{1}]\cong M[IA_{2}]$ if and only if $(I\mid A_{2})$ can be
obtained from $(I\mid A_{1})$ using the following three types of operations on
matrices of the form $(I\mid A)$:

(a) Permute the columns of $A$.

(b) Permute the columns of $I$ and the rows of $(I\mid A)$, using the same permutation.

(c) Suppose the $jk$ entry of $A$ is $a_{jk}=1$. Then replace $a_{bc}$ with
$1+a_{bc}$ whenever $b\neq j$, $c\neq k$, $a_{jc}=1$ and $a_{bk}=1$.
\end{theorem}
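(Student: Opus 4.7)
The plan is to prove the two directions separately.

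For the ``if'' direction, I would verify that each of operations (a), (b), (c) preserves matroid isomorphism. Operations (a) and (b) simply relabel non-basis or basis elements respectively (with (b) also permuting rows so that the identity block is preserved), yielding matroids isomorphic to the original via those relabelings. The substantive case is (c). Here I would identify (c) with a standard matroid pivot: assuming $a_{jk}=1$, perform the row operations that add row $j$ to each row $b \neq j$ with $a_{bk}=1$, and then swap the $j$-th column of the $I$-block with the $k$-th column of the $A$-block. A direct calculation shows that the result is a standard representation of the \emph{same} matroid $M(IA_1)$, but now with a basis in which the $j$-th basis element has been exchanged for the $k$-th non-basis element; moreover its new $A$-block is exactly the matrix prescribed by the toggling rule of (c). Hence (c) is this pivot followed by swapping the names of those two ground-set elements, and both steps preserve isomorphism type.

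For the ``only if'' direction, I would rely on the uniqueness of standard representations: given a binary matroid $M$ on a labeled ground set and an ordered basis $B$, the matrix $A$ in $(I \mid A)$ is determined, because the column of $A$ indexed by a non-basis element $f$ is exactly the indicator of those basis elements lying in the fundamental circuit $C(f,B)$. Consequently it suffices to transform $(I \mid A_1)$, using only operations (a), (b), (c), into a matrix whose underlying matroid on the common ground set is equal to (and not merely isomorphic to) $M(IA_2)$.

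To produce such a transformation, fix an isomorphism $\phi : M(IA_1) \to M(IA_2)$ and let $B^*$ denote the basis of $M(IA_1)$ that $\phi$ sends to the identity basis of $M(IA_2)$. By the matroid basis-exchange property, in $M(IA_1)$ one can pass from the identity basis to $B^*$ through a sequence of single-element basis exchanges. By the interpretation of (c) established in the first paragraph, each such exchange can be realized as an application of (c) at a nonzero entry of the current matrix. After carrying out all these pivots, the resulting standard representation's matroid agrees with $M(IA_2)$ on which ground-set elements occupy the basis block and which occupy the non-basis block; the only remaining discrepancy is a reordering within the basis and within the non-basis, which is corrected by a pair of permutations of types (a) and (b). By the uniqueness of standard representations noted above, the end product of the entire sequence is exactly $(I \mid A_2)$.

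The main difficulty is a bookkeeping one. Operation (c) leaves the column labels untouched but effectively permutes the matroid structure on the ground set by the transposition swapping the $j$-th basis element with the $k$-th non-basis element. One must carefully verify that the composition of these hidden label-transpositions, combined with the genuine permutations produced by (a) and (b), realizes the given isomorphism $\phi$ up to a residual element of $S_r \times S_{n-r}$. This reduces to the bijective correspondence between valid pivots applied to the evolving matrix and valid basis exchanges in the fixed original matroid $M(IA_1)$.
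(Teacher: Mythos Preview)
Your proposal is correct and follows essentially the same route as the paper: you identify operation (c) with the pivot that passes to the standard representation relative to the exchanged basis $B \Delta \{s_j, s_{r+k}\}$, observe that (a) and (b) are mere relabelings within the non-basis and basis blocks, and then invoke connectivity of the basis-exchange graph together with the uniqueness of the fundamental-circuit description of $A$ to finish. The paper's proof is more terse and leaves the label-tracking you flag in your final paragraph implicit, but the substance is the same.
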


\begin{proof}
As noted above, a standard presentation of a rank-$r$ binary matroid $M$ on an
$n$-element set $S$ is obtained as follows.\ First choose a basis $B$, and
index its elements as $s_{1},...,s_{r}$. Then index the remaining elements of
$S$ as $s_{r+1},...,s_{n}$. Finally, let $A$ be the $r\times(n-r)$ matrix
whose $jk$ entry is 1 if and only if $s_{j}$ is an element of the fundamental
circuit $C(s_{r+k},B)$.

Operations of types (a) and (b) correspond to re-indexings of $S-B$ and $B$, respectively.

Suppose now that $a_{jk}=1$, and let $A^{\prime}$ be the matrix obtained from
$A$ by an operation of type (c). Another way to describe $A^{\prime}$ is this:
$(I\mid A^{\prime})$ is obtained from $(I\mid A)$ by first interchanging the
$j$th and $(r+k)$th columns, and then adding the $j$th row of the resulting
matrix to every other row in which the original $(r+k)$th column has a nonzero
entry. That is, the matrix $(I\mid A^{\prime})$ is simply the standard
representation corresponding to the basis $B\Delta\{s_{j},s_{r+k}\}$, with the
elements other than $s_{j}$ and $s_{r+k}$ indexed as they were before.

The theorem follows, because basis exchanges $B\mapsto B\Delta\{b,x\}$
eventually construct every basis of $M$ from any one.
\end{proof}

We refer to an operation of type (c) as a \emph{basis exchange} involving the
$j$th column of $I$ and the $k$th column of $A$. (It would also be natural to
call it a \emph{pivot}, but this term already has other meanings.)

\section{$M[IAS(G)]$ and compatible isomorphisms}

If $G$ is a looped simple graph then $A(G)$ denotes the adjacency matrix of
$G$, and $AS(G)$ denotes the matrix $(A(G)\mid I+A(G))$. (\thinspace$S$ is for
\textquotedblleft sum.\textquotedblright) As mentioned in the introduction,
the ground set $W(G)$ of the isotropic matroid $M[IAS(G)]$ is partitioned into
three-element vertex triples; the vertex $v$ corresponds to the vertex triple
$\{v_{\phi},v_{\chi},v_{\psi}\}$.

It is convenient to adopt notation to describe matroid isomorphisms that are
compatible with these vertex triples. Let $S_{3}$ denote the group of
permutations of the three symbols $\phi$, $\chi$ and $\psi$. We use standard
notation in $S_{3}$: for instance $1$ is the identity, $(\phi\chi)$ is a
transposition, and $(\phi\chi)(\chi\psi)=(\psi\phi\chi)$ is a 3-cycle.

Suppose $G_{1}$ and $G_{2}$ are looped simple graphs, and there is a
compatible isomorphism $\beta:M[IAS(G_{1})]\rightarrow M[IAS(G_{2})]$. Then
the isomorphism consists of two parts. First, there is an induced bijection
$V(G_{1})\rightarrow V(G_{2})$; in general we will denote this bijection
$\beta$ too, though up to isomorphism we may always presume that
$V(G_{1})=V(G_{2})$ and the induced bijection is the identity map. Second,
there is a function $f_{\beta}:V(G_{1})\rightarrow S_{3}$ such that
$\beta(v_{\iota})=\beta(v)_{f_{\beta}(v)(\iota)}$ $\forall v\in V(G_{1})$
$\forall\iota\in\{\phi,\chi,\psi\}$. In this situation we say that $\beta$
is\emph{ determined} \emph{by} $f_{\beta}$.

Here are two obvious properties of compatible isomorphisms.

\begin{lemma}
\label{product}If $\beta_{1}:M[IAS(G_{1})]\rightarrow M[IAS(G_{2})]$ and
$\beta_{2}:M[IAS(G_{2})]$ $\rightarrow M[IAS(G_{3})]$ are compatible
isomorphisms then so is $\beta_{2}\circ\beta_{1}:M[IAS(G_{1})]\rightarrow$
$M[IAS(G_{3})]$, and it is determined by the map $f:V(G_{1})\rightarrow S_{3}$
given by $f(v)=f_{\beta_{2}}(\beta_{1}(v))\cdot f_{\beta_{1}}(v)$.
\end{lemma}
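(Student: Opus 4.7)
The plan is to verify the claim by directly unravelling the definition of a compatible isomorphism and composing, since the lemma is essentially a bookkeeping exercise about how the ``$S_3$-labels'' $f_\beta$ behave under composition.

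First, I would observe that $\beta_2 \circ \beta_1$ is automatically a matroid isomorphism $M(IAS(G_1)) \to M(IAS(G_3))$, so the only thing to check is that it respects the canonical partitions and that the resulting label map is the one claimed. For compatibility: given $v \in V(G_1)$, the canonical triple $\{v_\phi, v_\chi, v_\psi\}$ is sent by $\beta_1$ onto $\{\beta_1(v)_\phi, \beta_1(v)_\chi, \beta_1(v)_\psi\}$ (a canonical triple in $W(G_2)$), and $\beta_2$ in turn sends this onto the canonical triple of $\beta_2(\beta_1(v))$ in $W(G_3)$. Hence $\beta_2 \circ \beta_1$ maps each canonical triple of $W(G_1)$ onto a canonical triple of $W(G_3)$, which is precisely the compatibility condition, and the induced bijection on vertex sets is $\beta_2 \circ \beta_1$.

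Next, I would compute the label map. For any $v \in V(G_1)$ and $\iota \in \{\phi, \chi, \psi\}$, applying the defining identity $\beta(v_\iota) = \beta(v)_{f_\beta(v)(\iota)}$ twice gives
\[
(\beta_2 \circ \beta_1)(v_\iota) = \beta_2\bigl(\beta_1(v)_{f_{\beta_1}(v)(\iota)}\bigr) = \beta_2(\beta_1(v))_{f_{\beta_2}(\beta_1(v))(f_{\beta_1}(v)(\iota))}.
\]
Reading off the subscript, this is precisely $(\beta_2\circ\beta_1)(v)_{f(v)(\iota)}$ with $f(v) = f_{\beta_2}(\beta_1(v)) \cdot f_{\beta_1}(v)$, where the product in $S_3$ is composition of permutations (the rightmost factor acting first, consistent with the convention signalled by the earlier example $(\phi\chi)(\chi\psi) = (\psi\phi\chi)$).

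There is no real obstacle here: the statement is purely formal, and the only thing one has to be careful about is the order of the two factors in the $S_3$-product, which is dictated by the convention for composing permutations used in the paper. Once that convention is fixed, the identity follows immediately from applying the defining relation of a compatible isomorphism twice.
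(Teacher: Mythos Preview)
Your proof is correct and is precisely the routine verification the paper has in mind; indeed the paper offers no proof at all, simply calling this one of ``two obvious properties of compatible isomorphisms,'' and your argument is the natural unpacking of that obviousness. Your care in checking the order of multiplication in $S_3$ against the paper's convention $(\phi\chi)(\chi\psi)=(\psi\phi\chi)$ is the only point that needs any attention, and you handled it correctly.
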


\begin{lemma}
\label{inverse}If $\beta:M[IAS(G_{1})]\rightarrow M[IAS(G_{2})]$ is a
compatible isomorphism then so is $\beta^{-1}:M[IAS(G_{2})]\rightarrow
M[IAS(G_{1})]$, and $\beta^{-1}$ is determined by the map $f_{\beta^{-1}%
}:V(G_{2})\rightarrow S_{3}$ given by $f_{\beta^{-1}}(v)=f_{\beta}(\beta
^{-1}(v))^{-1}$.
\end{lemma}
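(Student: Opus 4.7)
The plan is to handle the two assertions of the lemma in order: first, that $\beta^{-1}$ is compatible with the canonical partitions, and second, that its determining function has the stated form.

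For the first assertion, the key observation is that a compatible isomorphism carries each canonical triple bijectively onto a canonical triple. Explicitly, since $f_{\beta}(v)$ is a permutation of $\{\phi,\chi,\psi\}$, the defining relation $\beta(v_{\iota})=\beta(v)_{f_{\beta}(v)(\iota)}$ tells us that $\beta$ sends the triple $\{v_{\phi},v_{\chi},v_{\psi}\}\subset W(G_{1})$ onto the triple $\{\beta(v)_{\phi},\beta(v)_{\chi},\beta(v)_{\psi}\}\subset W(G_{2})$. Running this in reverse shows that $\beta^{-1}$ sends each canonical triple of $W(G_{2})$ onto a canonical triple of $W(G_{1})$, and that the induced vertex bijection $V(G_{2})\to V(G_{1})$ is the set-theoretic inverse of the vertex bijection of $\beta$. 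Hence $\beta^{-1}$ is a compatible isomorphism, and there is a well-defined function $f_{\beta^{-1}}:V(G_{2})\to S_{3}$ to compute.

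For the second assertion, the cleanest route is to apply Lemma \ref{product} to the composite $\beta^{-1}\circ\beta=\mathrm{id}_{M(IAS(G_{1}))}$. The identity is evidently a compatible isomorphism determined by the constant function with value $1\in S_{3}$, so Lemma \ref{product} yields
\[
1 = f_{\beta^{-1}}(\beta(v))\cdot f_{\beta}(v)
\]
for every $v\in V(G_{1})$. Setting $w=\beta(v)$, equivalently $v=\beta^{-1}(w)$, we conclude $f_{\beta^{-1}}(w)=f_{\beta}(\beta^{-1}(w))^{-1}$, as claimed. (Alternatively, one can verify this directly: if $\sigma=f_{\beta^{-1}}(w)$ and $v=\beta^{-1}(w)$, then $\beta^{-1}(w_{\iota})=v_{\sigma(\iota)}$; applying $\beta$ to both sides gives $w_{\iota}=w_{f_{\beta}(v)(\sigma(\iota))}$ for every $\iota$, forcing $f_{\beta}(v)\circ\sigma=1$.)

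There is essentially no serious obstacle here; the only care required is bookkeeping around the order of composition in $S_{3}$ and a clean separation of the two parts of a compatible isomorphism (the vertex bijection and the $S_{3}$-valued function), each of which needs to be inverted correctly. Once compatibility of $\beta^{-1}$ has been established, Lemma \ref{product} delivers the formula immediately.
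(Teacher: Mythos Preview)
Your proof is correct. The paper actually gives no proof of this lemma at all---it is simply stated as one of ``two obvious properties of compatible isomorphisms''---so there is no approach to compare against; your argument via Lemma~\ref{product} (together with the direct verification you sketch as an alternative) is a perfectly clean way to spell out the details.
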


The next property is not quite so obvious.

\begin{lemma}
\label{identity}Suppose $\beta:M[IAS(G_{1})]$ $\rightarrow M[IAS(G_{2})]$ is a
compatible isomorphism, determined by the map $f:V(G_{1})\rightarrow S_{3}$
with $f(v)=1$ $\forall v\in V(G_{1})$. Then the bijection $\beta
:V(G_{1})\rightarrow V(G_{2})$ is an isomorphism between $G_{1}$ and $G_{2}$.
\end{lemma}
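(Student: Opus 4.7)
First I would invoke the observation in the definition of a compatible isomorphism and assume, without loss of generality, that $V(G_1)=V(G_2)=V$ and that the vertex bijection induced by $\beta$ is the identity. The hypothesis $f_{\beta}(v)=1$ for every $v$ then means exactly that $\beta(v_{\iota})=v_{\iota}$ for all $v\in V$ and all $\iota\in\{\phi,\chi,\psi\}$. Under this reduction, showing $G_1\cong G_2$ amounts to showing $G_1=G_2$, i.e.\ $A(G_1)=A(G_2)$.

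Next I would single out the basis $B=\{v_{\phi}\mid v\in V\}$ of each $M(IAS(G_i))$; it is a basis because it indexes the identity block of the representing matrix $(I\mid A(G_i)\mid I+A(G_i))$. As noted in the discussion after Definition~2, in a standard representation the fundamental circuit of a non-basis element $x$ with respect to the basis of $I$-columns is read directly off the $x$-column. Thus in $M(IAS(G_i))$,
\[
C(v_{\chi},B)=\{v_{\chi}\}\cup\{w_{\phi}\mid A(G_i)_{w,v}=1\},
\]
since the $v_{\chi}$ column of $IAS(G_i)$ is the $v$-th column of $A(G_i)$.

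Now I would use that matroid isomorphisms send fundamental circuits to fundamental circuits: because $\beta$ carries $B$ bijectively to $B$ and fixes $v_{\chi}$ and each $w_{\phi}$ pointwise, the image under $\beta$ of the fundamental circuit of $v_{\chi}$ computed in $M(IAS(G_1))$ is exactly the fundamental circuit of $v_{\chi}$ computed in $M(IAS(G_2))$, as the \emph{same} subset of $W(G_2)$. Comparing the two descriptions above forces $A(G_1)_{w,v}=A(G_2)_{w,v}$ for all $v,w\in V$, so $A(G_1)=A(G_2)$ and $G_1=G_2$.

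The argument is essentially a bookkeeping exercise: once the canonical partition and the $\phi$-basis are lined up, the columns of $A(G)$ are recorded literally as fundamental circuits of $M(IAS(G))$, so a label-preserving matroid isomorphism is forced to preserve adjacency. Consequently I do not anticipate a substantive obstacle; the only care needed is keeping the roles of the three subscripts $\phi,\chi,\psi$ straight and citing the standard fact that fundamental circuits with respect to a basis are a matroid isomorphism invariant.
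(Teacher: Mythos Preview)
Your argument is correct and is essentially the same as the paper's: reduce to the case where $\beta$ is the identity map on $W(G_1)=W(G_2)$, observe that it must then preserve the fundamental circuits with respect to the basis $\Phi=\{v_{\phi}\}$, and read off the equality of the $A(G_i)$ columns from those circuits. The paper phrases the conclusion as $AS(G_1)=AS(G_2)$ rather than just $A(G_1)=A(G_2)$, but that is an immaterial difference.
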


\begin{proof}
Up to isomorphism, we may as well presume that $V(G_{1})=V(G_{2})$ and the
bijection $V(G_{1})\rightarrow V(G_{2})$ induced by $\beta$ is the identity
map. Then $M[IAS(G_{1})]$ and $M[IAS(G_{2})]$ are isomorphic matroids on the
ground set $W(G_{1})$ $=W(G_{2})$. As $f(v)\equiv1$, the compatible
isomorphism $\beta$ is the identity map of this ground set.

The identity map preserves the basis $\Phi=\{v_{\phi}\mid v\in V(G_{1})\}$.
The identity map is a matroid isomorphism, so it must also preserve
fundamental circuits with respect to $\Phi$. Recall the discussion of Section
2: the column of $IAS(G_{i})$ corresponding to $x\notin\Phi$ is determined by
the fundamental circuit of $x$ with respect to $\Phi$ in $M[IAS(G_{i})]$. It
follows that the matrices $AS(G_{1})$ and $AS(G_{2})$ are identical.
\end{proof}

Lemmas \ref{product} and \ref{identity} imply the following.

\begin{corollary}
\label{uniqueness}Suppose $\beta_{1}:M[IAS(G_{1})]\rightarrow M[IAS(G_{2})]$
and $\beta_{2}:M[IAS(G_{1})]$ $\rightarrow M[IAS(G_{3})]$ are compatible
isomorphisms, and the associated functions $f_{\beta_{1}}$, $f_{\beta_{2}%
}:V(G_{1})\rightarrow S_{3}$ are the same. Then the bijection $V(G_{2}%
)\rightarrow V(G_{3})$ defined by $\beta_{2}\circ\beta_{1}^{-1}$ is an
isomorphism between $G_{2}$ and $G_{3}$.
\end{corollary}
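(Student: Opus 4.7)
The plan is to reduce this to Lemma \ref{identity} by constructing a compatible isomorphism $M(IAS(G_2))\to M(IAS(G_3))$ whose associated $S_3$-valued function is identically $1$.

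First I would form the composite $\gamma=\beta_{2}\circ\beta_{1}^{-1}:M(IAS(G_{2}))\rightarrow M(IAS(G_{3}))$. By Lemma \ref{inverse}, $\beta_{1}^{-1}$ is a compatible isomorphism determined by $f_{\beta_{1}^{-1}}(v)=f_{\beta_{1}}(\beta_{1}^{-1}(v))^{-1}$; then Lemma \ref{product} guarantees that $\gamma$ is a compatible isomorphism, and tells us its determining function explicitly:
\[
f_{\gamma}(v)=f_{\beta_{2}}(\beta_{1}^{-1}(v))\cdot f_{\beta_{1}^{-1}}(v)=f_{\beta_{2}}(\beta_{1}^{-1}(v))\cdot f_{\beta_{1}}(\beta_{1}^{-1}(v))^{-1}
\]
for every $v\in V(G_{2})$.

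Next I would invoke the hypothesis $f_{\beta_{1}}=f_{\beta_{2}}$: at the vertex $\beta_{1}^{-1}(v)\in V(G_{1})$, the two functions take the same value, so the right-hand side collapses to $f_{\beta_{1}}(\beta_{1}^{-1}(v))\cdot f_{\beta_{1}}(\beta_{1}^{-1}(v))^{-1}=1$. Hence $f_{\gamma}(v)=1$ for all $v\in V(G_{2})$.

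Finally, I would apply Lemma \ref{identity} to $\gamma$: since $\gamma$ is a compatible isomorphism determined by the constant function $1$, its graphs $G_{2}$ and $G_{3}$ are isomorphic, which is exactly the conclusion required. The only substantive point is the bookkeeping of $S_{3}$ values along $\beta_{1}^{-1}$ via Lemmas \ref{product} and \ref{inverse}; there is no real obstacle, since once the composite is formed the cancellation is immediate.
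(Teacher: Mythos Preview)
Your argument is correct and is exactly the approach the paper takes: form $\beta_{2}\circ\beta_{1}^{-1}$, use Lemmas \ref{product} and \ref{inverse} to see its determining function is identically $1$, and conclude via Lemma \ref{identity}. The paper's proof simply says ``apply the lemmas to the composition of $\beta_{2}$ and $\beta_{1}^{-1}$,'' whereas you have spelled out the $S_{3}$ bookkeeping explicitly.
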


\section{Complements and pivots}

In this section we prove that the matroid $M[IAS(G)]$ classifies $G$ under
several different kinds of operations.

\subsection{Loop complementation}

Suppose $G_{1}$ is a looped simple graph, $v\in V(G_{1})$, and $G_{2}$ is the
graph obtained from $G_{1}$ by complementing (reversing) the loop status of
$v$. Clearly then $IAS(G_{2})$ is the matrix obtained from $IAS(G_{1})$ by
interchanging the $v_{\chi}$ and $v_{\psi}$ columns. This interchange is an
example of an operation of type (a), so Theorem \ref{fund} tells us that there
is a compatible isomorphism $M[IAS(G_{1})]\rightarrow M[IAS(G_{2})]$
determined by the map $f:V(G_{1})\rightarrow S_{3}$ given by%
\[
f(w)=\left\{
\begin{array}
[c]{cc}%
\text{the transposition }(\chi\psi)\text{,} & \text{if }w=v\\
& \\
1\text{,} & \text{if }w\neq v
\end{array}
\right.  .
\]

The converse also holds:

\begin{theorem}
\label{thmloop}Let $G_{1}$ and $G_{2}$ be looped simple graphs, and suppose
$v\in V(G_{1})$. Then these two conditions are equivalent:

\begin{enumerate}
\item Up to isomorphism, $G_{2}$ is the graph obtained from $G_{1}$ by
complementing the loop status of $v$.

\item There is a compatible isomorphism $\beta:M[IAS(G_{1})]\rightarrow
M[IAS(G_{2})]$ such that $f_{\beta}(v)=(\chi\psi)$ and $f_{\beta}(w)=1$
$\forall w\neq v$.
\end{enumerate}
\end{theorem}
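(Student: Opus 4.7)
The plan is to observe that the direction (1) $\Rightarrow$ (2) is essentially done in the paragraph immediately preceding the theorem statement: loop-complementing $v$ has the effect of swapping the columns $v_\chi$ and $v_\psi$ of $IAS(G_1)$, which is a type-(a) operation in Theorem \ref{fund}, and the explicit $f$ displayed just above the theorem is exactly the one required by condition (2). So the work lies entirely in the reverse implication (2) $\Rightarrow$ (1).

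For (2) $\Rightarrow$ (1), I would not attempt to read off the effect of $\beta$ on the adjacency matrices directly; instead I would exploit the uniqueness machinery already developed. Let $G'$ denote the graph obtained from $G_1$ by complementing the loop status of $v$. Applying the (already established) forward direction to $G_1$ and $G'$ produces a compatible isomorphism
\[
\beta' : M(IAS(G_1)) \longrightarrow M(IAS(G'))
\]
whose associated map $f_{\beta'} : V(G_1) \to S_3$ satisfies $f_{\beta'}(v) = (\chi\psi)$ and $f_{\beta'}(w) = 1$ for $w \neq v$. By hypothesis $\beta$ is a compatible isomorphism with exactly the same associated function. Thus $f_{\beta} = f_{\beta'}$, and Corollary \ref{uniqueness}, applied to the pair $\beta, \beta'$ emanating from $M(IAS(G_1))$, yields $G_2 \cong G'$. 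Since $G'$ is by construction the loop-complement of $G_1$ at $v$, condition (1) holds up to isomorphism.

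I do not expect a real obstacle here; the proof is essentially a bookkeeping argument once one notices that Corollary \ref{uniqueness} is tailor-made for exactly this situation. The only point that requires a moment's care is that the forward direction produces $\beta'$ with the \emph{identical} $f$-function, not merely a conjugate or otherwise related one — but this is visible from the explicit formula displayed before the theorem. All subsequent subsections in Section 4 (dealing with simple local complementation, edge pivot, etc.) will presumably follow the same two-step template: verify (1) $\Rightarrow$ (2) by an explicit column/row manipulation of $IAS(G)$ justified by Theorem \ref{fund}, then deduce (2) $\Rightarrow$ (1) formally via Corollary \ref{uniqueness}.
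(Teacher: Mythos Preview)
Your proof is correct and matches the paper's own argument exactly: the forward direction is the column swap discussed just before the statement, and the converse is an immediate application of Corollary~\ref{uniqueness}. Your anticipation that the remaining results in Section~4 follow the same two-step template is also accurate.
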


\begin{proof}
We have already discussed the implication 1$\Rightarrow$2. The converse
follows from Corollary \ref{uniqueness}.
\end{proof}

\subsection{Local complementation}

Two different versions of \emph{local complementation} appear in the
literature. \emph{Simple} local complementation was introduced by Bouchet
\cite{Bold} and Rosenstiehl and Read \cite{RR}, as part of the theory of
interlacement in 4-regular graphs. This operation does not involve the
creation of loops, so it is the version seen most often in graph theory, where
the theory of simple graphs predominates. \emph{Non-simple} local
complementation is part of the theory of the principal pivot transform (PPT)
over $GF(2)$. The general theory of PPT was introduced by Tucker \cite{Tu};
see also the survey of Tsatsomeros \cite{Ts}. The special significance of
non-simple local complementation in PPT over $GF(2)$ was discussed by Geelen
\cite{G}. Later (and independently) non-simple local complementation was
introduced by Arratia, Bollob\'{a}s and Sorkin as part of the theory of the
two-variable interlace polynomial \cite{A}. We should emphasize that simple
local complementations are usually applied only to simple graphs in the first
set of references, and non-simple local complementations are usually applied
only with respect to looped vertices in the second set of references. Our
definitions are not so restrictive.

\begin{definition}
If $G$ is a looped simple graph and $v\in V(G)$ then the \emph{simple local
complement of }$G$ \emph{with respect to }$v$ is the graph $G_{s}^{v}$
obtained from $G$ by complementing all adjacencies between distinct elements
of the open neighborhood $N(v)$. The \emph{non-simple local complement of }$G$
\emph{with respect to }$v$ is the graph $G_{ns}^{v}$ obtained from $G_{s}^{v}$
by complementing the loop status of each element of $N(v)$.
\end{definition}

Observe that replacing $A(G)$ by $A(G_{ns}^{v})$ has precisely the same effect
on the matrix $IAS(G)$ as a type (c) operation from Theorem \ref{fund}. As
discussed in Section 2, this operation is equivalent to a basis exchange
involving $v_{\phi}$ and either $v_{\chi}$ (if $v$ is looped) or $v_{\psi}$
(if $v$ is unlooped). We deduce the following.

\begin{theorem}
\label{lcinv}Let $G_{1}$ and $G_{2}$ be looped simple graphs, and suppose
$v\in V(G_{1})$. Then these two conditions are equivalent:

\begin{enumerate}
\item \noindent$G_{2}$ is isomorphic to $(G_{1})_{ns}^{v}$.

\item \noindent There is a compatible isomorphism $\beta:M[IAS(G_{1}%
)]\rightarrow M[IAS(G_{2})]$ such that $f_{\beta}(w)=1$ $\forall w\neq v$ and
\[
f_{\beta}(v)=\left\{
\begin{array}
[c]{cc}%
(\phi\chi)\text{,} & \text{if }v\text{ is looped in }G_{1}\\
& \\
(\phi\psi) & \text{if }v\text{ is not looped in }G_{1}%
\end{array}
\right.  .
\]
\noindent
\end{enumerate}
\end{theorem}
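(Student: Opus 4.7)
The plan is to split the proof into the two implications. The forward direction ($1 \Rightarrow 2$) is essentially sketched in the paragraph preceding the statement, so my work is to expand that sketch into an explicit calculation. The reverse direction ($2 \Rightarrow 1$) will then be an immediate consequence of Corollary \ref{uniqueness}.

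For $1 \Rightarrow 2$, I assume $G_2 \cong (G_1)_{ns}^v$ and apply a type (c) operation to $IAS(G_1)$, pivoting on position $(v, v_\chi)$ when $v$ is looped (where the pivot entry $A(G_1)_{vv}$ equals $1$) or on $(v, v_\psi)$ when $v$ is unlooped (where $(I + A(G_1))_{vv}$ equals $1$). Writing $IAS(G_1) = (I \mid \mathrm{rest})$ with $\mathrm{rest} = (A(G_1) \mid I + A(G_1))$, I would check in the looped case that row $v$ of $\mathrm{rest}$ has $1$s exactly at column $v_\chi$ together with columns $w_\chi$ and $w_\psi$ for $w \in N(v)$, while column $v_\chi$ of $\mathrm{rest}$ has $1$s exactly at row $v$ together with the rows in $N(v)$. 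Hence the type (c) operation toggles the entries $(b, w_\chi)$ and $(b, w_\psi)$ for all $b, w \in N(v)$. In the middle block this complements $A(G_1)_{bw}$ for every pair $b, w \in N(v)$, which off the diagonal complements adjacencies inside $N(v)$ and on the diagonal flips the loop status of each vertex of $N(v)$ --- exactly the recipe for $(G_1)_{ns}^v$. The right block is toggled in the same pattern, so it remains equal to $I$ plus the new middle block, and the resulting matrix is $IAS(G_2)$. By Theorem \ref{fund}, the induced matroid isomorphism swaps $v_\phi$ with $v_\chi$ and fixes all other columns; this is exactly a compatible isomorphism with $f_\beta(v) = (\phi\chi)$ and $f_\beta(w) = 1$ for $w \neq v$. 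The unlooped case runs identically with $\psi$ in place of $\chi$, yielding $f_\beta(v) = (\phi\psi)$.

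For $2 \Rightarrow 1$, let $\beta'$ denote the compatible isomorphism $M(IAS(G_1)) \to M(IAS((G_1)_{ns}^v))$ just constructed; its associated function agrees with the hypothesized $f_\beta$, and so Corollary \ref{uniqueness} forces $G_2 \cong (G_1)_{ns}^v$. I expect the main obstacle to be the bookkeeping in the forward-direction calculation, particularly verifying that the same toggling pattern applied to both the middle and right blocks preserves the relation ``right block $=$ $I$ plus middle block.'' This coherence is what makes $IAS(G)$, rather than just $(I \mid A(G))$, the natural object here: the $\psi$-columns supply a valid pivot when $v$ is unlooped, so that non-simple local complementation always corresponds to a single matroid basis exchange.
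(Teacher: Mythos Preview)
Your proposal is correct and follows the paper's own approach exactly: the forward direction expands the observation (made just before the theorem) that the type~(c) operation of Theorem~\ref{fund} at $(v,v_\chi)$ or $(v,v_\psi)$ produces $IAS((G_1)_{ns}^v)$, and the converse is precisely the appeal to Corollary~\ref{uniqueness}. Your explicit bookkeeping on which entries of $AS(G_1)$ get toggled is accurate, including the point that the identical toggling pattern on both blocks preserves the relation ``right block $=I+$ middle block.''
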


\begin{proof}
As already noted, 1$\Rightarrow$2 because $IAS((G_{1})_{ns}^{v})$ is the same
as the matrix associated with the standard presentation of $M[IAS(G_{1})]$
obtained from $IAS(G_{1})$ by a basis exchange involving $v_{\phi}$ and either
$v_{\chi}$ or $v_{\psi}$. The converse follows from Corollary \ref{uniqueness}.
\end{proof}

\subsection{Pivots}

Here is a well-known definition. The reader who is encountering it for the
first time should take a moment to verify that the two indicated triple local
complements are indeed the same.

\begin{definition}
\label{pivdef}If $v$ and $w$ are neighbors in $G$ then the \emph{edge pivot}
$G^{vw}$ is the triple simple local complement:%
\[
G^{vw}=((G_{s}^{v})_{s}^{w})_{s}^{v}=((G_{s}^{w})_{s}^{v})_{s}^{w}.
\]

\end{definition}

Note that we do not restrict edge pivots to edges with unlooped vertices.

\begin{corollary}
\label{pivinv}Let $G_{1}$ and $G_{2}$ be looped simple graphs, and suppose
$v\neq w$ are neighbors in $G_{1}$. Then these two conditions are equivalent:

\begin{enumerate}
\item $G_{2}$ is isomorphic to $(G_{1})^{vw}$.

\item There is a compatible isomorphism $\beta:M[IAS(G_{1})]\rightarrow
M[IAS(G_{2})]$ such that%
\[
f_{\beta}(v)=\left\{
\begin{array}
[c]{cc}%
(\phi\chi)\text{,} & \text{if }v\text{ is unlooped}\\
& \\
(\phi\psi)\text{,} & \text{ if }v\text{ is looped}%
\end{array}
\right.  \text{, }f_{\beta}(w)=\left\{
\begin{array}
[c]{cc}%
(\phi\chi)\text{,} & \text{if }w\text{ is unlooped}\\
& \\
(\phi\psi)\text{,} & \text{ if }w\text{ is looped}%
\end{array}
\right.
\]
and $f_{\beta}(x)=1$ $\forall x\notin\{v,w\}$.
\end{enumerate}
\end{corollary}

\begin{proof}
The reader can easily check that the definition%
\[
G_{1}^{vw}=(((G_{1})_{s}^{v})_{s}^{w})_{s}^{v}%
\]
is equivalent to saying this: $G_{1}^{vw}$ is obtained from $(((G_{1}%
)_{ns}^{v})_{ns}^{w})_{ns}^{v}$ by complementing the loop status of $v$. (The
reason is that $v$ is the only vertex whose loop status is complemented an odd
number of times in $(((G_{1})_{ns}^{v})_{ns}^{w})_{ns}^{v}$.) It follows that
there is a compatible isomorphism $M[IAS(G_{1})]\rightarrow M[IAS(G_{1}%
^{vw})]$ obtained by composing four compatible isomorphisms, three from
Theorem \ref{lcinv} and one from Theorem \ref{thmloop}. According to Lemma
\ref{product}, this compatible isomorphism is determined by the function
$f:V(G)\rightarrow S_{3}$ such that $f(x)=1$ for $w\neq x\neq v$,
$f(w)=(\phi\psi)$ if $w$ is unlooped in $(G_{1})_{ns}^{v}$, $f(w)=(\phi\chi)$
if $w$ is looped in $(G_{1})_{ns}^{v}$, and
\[
f(v)=\left\{
\begin{array}
[c]{cc}%
(\chi\psi)\cdot(\phi\chi)\cdot1\cdot(\phi\psi)\text{,} & \text{if }v\text{ is
unlooped}\\
& \\
(\chi\psi)\cdot(\phi\psi)\cdot1\cdot(\phi\chi)\text{,} & \text{ if }v\text{ is
looped}%
\end{array}
\right.  \text{ .}%
\]

This verifies the implication 1$\Rightarrow$2. The converse follows from
Corollary \ref{uniqueness}.
\end{proof}

Notice that the function $f_{\beta}$ of Corollary \ref{pivinv} is the
combination of two separate functions, one $\equiv1$ except at $v$ and the
other $\equiv1$ except at $w$. According to Theorem \ref{fund}, these two
separate functions do not come from two separate compatible isomorphisms,
though. This fact is reflected in the proof, where the compatible isomorphism
$M[IAS(G_{1})]\rightarrow M[IAS(G_{1}^{vw})]$ is described as a composition of
\emph{four} simpler compatible isomorphisms, not two.

There is a different way to describe the compatible isomorphism of Corollary
\ref{pivinv}, using only two basis exchanges. According to Theorem \ref{fund},
if $v$ and $w$ are neighbors then there is a basis exchange involving
$v_{\phi}$ and either $w_{\chi}$ or $w_{\psi}$, as each of these columns has a
$1$ in the $v$ row. The matrix resulting from part (c) of Theorem \ref{fund}
is not of the form $(I\mid A\mid I+A)$ for a symmetric matrix $A$, so there is
no natural way to interpret such a basis exchange as a graph operation.
However, if this basis exchange is followed by one involving $w_{\phi}$ and
either $v_{\chi}$ or $v_{\psi}$, then the result \emph{is} of the form $(I\mid
A\mid I+A)$ for a symmetric matrix $A$. (We leave details to the interested
reader.) Moreover, $A$ closely resembles the adjacency matrix of $G_{1}^{vw}$;
the positions of $v$ and $w$ have been interchanged, though, and depending on
the $\chi,\psi$ choices the loop statuses of $v$ and $w$ may also have
changed. The fact that the compatible isomorphism $M[IAS(G_{1})]\rightarrow
M[IAS(G_{1}^{vw})]$ may be described in this different way, involving a
transposition of adjacency information regarding $v$ and $w$, is a reflection
of the fact that there is a different way to define the pivot. See Section 3
of \cite{A2}, where Arratia, Bollob\'{a}s and Sorkin give this different
definition, and show that it is related to Definition \ref{pivdef} by applying
a \textquotedblleft label swap\textquotedblright\ exchanging the names of $v$
and $w$.

\subsection{Classifying graphs using compatible isomorphisms}

The simplest classification theorem resulting from the above discussion is
this immediate consequence of Theorem \ref{thmloop}.

\begin{theorem}
Let $G_{1}$ and $G_{2}$ be looped simple graphs. Then these two conditions are equivalent:

\begin{enumerate}
\item Up to isomorphism, $G_{2}$ can be obtained from $G_{1}$ by complementing
the loop status of some vertices.

\item There is a compatible isomorphism $\beta:M[IAS(G_{1})]\rightarrow
M[IAS(G_{2})]$ such that $f_{\beta}(v)\in\{1,(\chi\psi)\}$ $\forall v\in
V(G_{1})$.
\end{enumerate}
\end{theorem}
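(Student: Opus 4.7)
The plan is to deduce this classification directly from Theorem \ref{thmloop}, using the composition and uniqueness tools already in place (Lemma \ref{product} and Corollary \ref{uniqueness}).

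For the implication $1 \Rightarrow 2$, suppose $G_2$ (up to isomorphism) arises from $G_1$ by complementing the loop status at each vertex in some subset $U \subseteq V(G_1)$. I would list the vertices of $U$ as $v_1, \ldots, v_k$, and let $H_0 = G_1$, $H_i$ be the graph obtained from $H_{i-1}$ by complementing the loop status of $v_i$. Theorem \ref{thmloop} supplies a compatible isomorphism $\beta_i : M(IAS(H_{i-1})) \to M(IAS(H_i))$ determined by the function sending $v_i \mapsto (\chi\psi)$ and all other vertices to $1$. Iterated application of Lemma \ref{product} yields a compatible isomorphism $\beta = \beta_k \circ \cdots \circ \beta_1 : M(IAS(G_1)) \to M(IAS(H_k)) \cong M(IAS(G_2))$. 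Since the transpositions $(\chi\psi)$ at distinct vertices act on disjoint copies of $S_3$ and each vertex in $U$ gets touched exactly once, the determining function is $f_\beta(v) = (\chi\psi)$ for $v \in U$ and $f_\beta(v) = 1$ otherwise, as required.

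For the converse $2 \Rightarrow 1$, suppose $\beta : M(IAS(G_1)) \to M(IAS(G_2))$ is a compatible isomorphism with $f_\beta(v) \in \{1, (\chi\psi)\}$ for every $v$. Set $U = \{v \in V(G_1) : f_\beta(v) = (\chi\psi)\}$ and let $G_1'$ be the graph obtained from $G_1$ by complementing the loop status at every vertex of $U$. Then by the $1 \Rightarrow 2$ direction just proved, there is a compatible isomorphism $\gamma : M(IAS(G_1)) \to M(IAS(G_1'))$ whose associated function $f_\gamma$ agrees with $f_\beta$. Corollary \ref{uniqueness} then gives $G_1' \cong G_2$, which is precisely condition 1.

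There is no real obstacle here: the work was done in Theorem \ref{thmloop}, and the present statement just packages the single-vertex case into its transitive closure. The only mild care needed is in the forward direction, to confirm that composing one-vertex loop-complementation isomorphisms yields the expected $f_\beta$ (using that the $S_3$-values at different vertices multiply independently under Lemma \ref{product}); and in the reverse direction, to recognize that Corollary \ref{uniqueness} is the correct lever for turning equality of $f$-functions into graph isomorphism.
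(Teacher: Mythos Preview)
Your proof is correct and follows the same approach as the paper, which simply declares the result an ``immediate consequence of Theorem \ref{thmloop}'' without further argument. You have merely written out the details the paper leaves implicit: composing the single-vertex isomorphisms via Lemma \ref{product} for $1\Rightarrow 2$, and invoking Corollary \ref{uniqueness} for $2\Rightarrow 1$.
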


Other classification theorems have similar statements, but take a little more
work to prove.

\begin{theorem}
\label{thmpiv}Let $G_{1}$ and $G_{2}$ be looped simple graphs. Then these two
conditions are equivalent:

\begin{enumerate}
\item Up to isomorphism, $G_{2}$ can be obtained from $G_{1}$ using edge pivots.

\item There is a compatible isomorphism $\beta:M[IAS(G_{1})]\rightarrow
M[IAS(G_{2})]$ such that $f_{\beta}(v)\in\{1,(\phi\psi)\}$ for every looped
$v\in V(G_{1})$ and $f_{\beta}(v)\in\{1,(\phi\chi)\}$ for every unlooped $v\in
V(G_{1})$.
\end{enumerate}
\end{theorem}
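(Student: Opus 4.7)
The plan is to treat the two implications separately, both by induction; throughout I adopt the standing identification of vertex sets made in Section~3 and write $t_v = (\phi\psi)$ if $v$ is looped and $t_v = (\phi\chi)$ if $v$ is unlooped. One basic observation to record at the outset is that an edge pivot preserves loop status, since a simple local complementation only toggles adjacencies among distinct elements of an open neighborhood; hence $t_v$ is unaffected by any sequence of pivots.

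For $1 \Rightarrow 2$, I would induct on the length of a sequence of edge pivots carrying $G_1$ to an isomorph of $G_2$. The base case uses the compatible isomorphism induced by a graph isomorphism $G_1 \to G_2$, which has $f \equiv 1$. For the inductive step, if $G_2 = G^{vw}$ with $G$ reached from $G_1$ in fewer pivots, Corollary \ref{pivinv} supplies a compatible isomorphism $\alpha : M(IAS(G)) \to M(IAS(G^{vw}))$ with $f_\alpha(v) = t_v$, $f_\alpha(w) = t_w$, and $f_\alpha(u) = 1$ otherwise. Composing with the inductively obtained compatible isomorphism $M(IAS(G_1)) \to M(IAS(G))$ and applying Lemma \ref{product}, the resulting function takes each value in the product set $\{1, t_u\} \cdot \{1, t_u\} = \{1, t_u\}$, which is exactly what condition~2 requires.

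For $2 \Rightarrow 1$, I would induct on $|U|$ where $U = \{v \in V(G_1) : f_\beta(v) \neq 1\}$. If $U = \emptyset$ then Lemma \ref{identity} gives $G_1 \cong G_2$ and zero pivots suffice. For the inductive step the crux is to produce an edge $vw$ of $G_1$ with both endpoints in $U$. Granting this, Corollary \ref{pivinv} yields a compatible isomorphism $\alpha : M(IAS(G_1)) \to M(IAS(G_1^{vw}))$ with $f_\alpha(v) = t_v$, $f_\alpha(w) = t_w$, and $f_\alpha(u) = 1$ elsewhere. Lemmas \ref{product} and \ref{inverse} then give a compatible isomorphism $\beta \circ \alpha^{-1} : M(IAS(G_1^{vw})) \to M(IAS(G_2))$ whose function at $u$ equals $f_\beta(u) \cdot f_\alpha(u)$; this vanishes at $v$ and $w$ because $t_u^2 = 1$ and coincides with $f_\beta(u)$ elsewhere. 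Since pivots leave loop status untouched, the admissibility condition on the new function has the same shape, so $|U|$ strictly decreases and induction applies.

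The main obstacle is producing the edge $vw$. My plan is to exploit the fact that $\beta^{-1}$ carries the basis $\Phi = \{v_\phi : v \in V(G_2)\}$ of $M(IAS(G_2))$ to the set
\[
\Phi' = \{v_\phi : v \notin U\} \cup \{v_\chi : v \in U \text{ and unlooped}\} \cup \{v_\psi : v \in U \text{ and looped}\},
\]
so $\Phi'$ is a basis of $M(IAS(G_1))$. Ordering rows and columns of the corresponding $|V(G_1)| \times |V(G_1)|$ submatrix $X$ of $IAS(G_1)$ by $(V(G_1) \setminus U) \cup U$, the upper-left block is the identity and the lower-left block is zero, so $X$ is nonsingular iff its lower-right $|U| \times |U|$ block $B$ is. A direct computation identifies $B$ with the principal submatrix of $A(G_1)$ indexed by $U$ after every diagonal entry is replaced by zero: off-diagonal entries are inherited from $A(G_1)$, while each diagonal entry vanishes either because the corresponding vertex was unlooped already or because the $v_\psi$ column contributes $(A(G_1) + I) e_v$, cancelling the diagonal $1$. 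Thus $B$ is the adjacency matrix of the simple graph obtained from $G_1[U]$ by deleting its loops; its nonsingularity forbids a zero row, so every $v \in U$ has a non-loop neighbor $w \in U$ in $G_1$, producing the required edge.
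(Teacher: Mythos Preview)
Your argument is correct and follows the same overall inductive skeleton as the paper: reduce $|U|$ by two after locating an edge whose endpoints both lie in $U$, using that the (pre)image of the basis $\Phi$ under the compatible isomorphism is again a basis.

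The difference from the paper is where the edge is found and where the pivot is performed. The paper maps $\Phi$ forward into $M(IAS(G_{2}))$, picks a single $v_{0}\in U$, and argues that the column $\beta(v_{0\phi})$ must have a nonzero entry in some row $v\in U$ with $v\neq v_{0}$, then pivots in $G_{2}$. You instead pull $\Phi$ back into $M(IAS(G_{1}))$, read off the block structure, and identify the lower-right block $B$ as the \emph{zero-diagonal} adjacency matrix of $G_{1}[U]$; nonsingularity of $B$ then forces every vertex of $U$ to have a non-loop neighbour in $U$, and you pivot in $G_{1}$. Working on the $G_{1}$ side is what makes the diagonal vanish automatically: condition~2 pairs each unlooped $v\in U$ with $v_{\chi}$ and each looped $v\in U$ with $v_{\psi}$, so the diagonal contributions cancel by design. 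This bypasses the step in the paper where one must know that the relevant row entry lies off the diagonal, and it also makes transparent the pleasant side effect that $|U|$ is necessarily even (a symmetric $GF(2)$-matrix with zero diagonal has even rank). Your version of $1\Rightarrow 2$ is likewise a slightly more explicit unpacking of the paper's ``apply Corollary~\ref{pivinv} repeatedly,'' with the loop-preservation remark doing the work of keeping the constraint $f_{\beta}(v)\in\{1,t_{v}\}$ stable under composition.
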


\begin{proof}
If $G^{\prime}$ can be obtained from $G$ using edge pivots, simply apply
Corollary \ref{pivinv} repeatedly.

For the converse, suppose condition 2 holds, and there are $k$ vertices with
$f_{\beta}(v)\neq1$. If $k=0$ then Corollary \ref{uniqueness} implies that
$G_{1}\cong G_{2}$.

If $k>0$ then let $v_{0}\in V(G)$ have $f_{\beta}(v_{0})\neq1$, and let
$\Phi=\{v_{\phi}\mid v\in V(G_{1})\}$. Then $\Phi$ is a basis of
$M[IAS(G_{1})]$, so $\beta(\Phi)$ is a basis of $M[IAS(G_{2})]$.
Consequently,
\[
\{\beta(v_{0\phi})\}\cup\{v_{\phi}\mid f_{\beta}(v_{\phi})=1\}
\]
cannot be dependent in $M[IAS(G_{2})]$, because it is a subset of $\beta
(\Phi)$. It follows that the column of $IAS(G_{2})$ corresponding to
$\beta(v_{0\phi})$ must have at least one nonzero entry in a row that
corresponds to a vertex $v\neq v_{0}$ with $f_{\beta}(v)\neq1$. Then $v$ is a
neighbor of $v_{0}$, and Corollary \ref{pivinv} implies that there is a
compatible isomorphism $\beta^{\prime}:M[IAS(G_{2})]\rightarrow M[IAS(G_{2}%
^{vv_{0}})]$ determined by the function $f_{\beta^{\prime}}:V(G_{2}%
)\rightarrow S_{3}$ with $f_{\beta^{\prime}}(w)=1$ $\forall w\notin
\{v,v_{0}\}$, $f_{\beta^{\prime}}(v_{0})=f_{\beta}(v_{0})$ and $f_{\beta
^{\prime}}(v)=f_{\beta}(v)$. The composition $\beta^{\prime}\circ\beta$ is a
compatible isomorphism $M[IAS(G_{1})]\rightarrow M[IAS(G_{2}^{vv_{0}})]$,
which satisfies condition 2; as $(f_{\beta^{\prime}\circ\beta})^{-1}%
(1)=\{v,v_{0}\}\cup f_{\beta}^{-1}(1)$, induction assures us that up to
isomorphism, $G_{2}^{vv_{0}}$ may be obtained from $G_{1}$ using edge pivots.
\end{proof}

Restricting Theorem \ref{thmpiv} to simple graphs yields Theorem
\ref{pptintro} of the introduction. The following corollary of Theorem
\ref{thmpiv} implies Theorem \ref{pptintro2}:

\begin{corollary}
\label{pptinv}Let $G_{1}$ and $G_{2}$ be looped simple graphs. Then these
three conditions are equivalent:

\begin{enumerate}
\item Up to isomorphism, $G_{2}$ can be obtained from $G_{1}$ using two kinds
of operations: non-simple local complementations with respect to looped
vertices, and edge pivots with respect to non-looped vertices.

\item Up to isomorphism, $G_{2}$ can be obtained from $G_{1}$ using PPT operations.

\item There is a compatible isomorphism $\beta:M[IAS(G_{1})]\rightarrow
M[IAS(G_{2})]$ such that $f_{\beta}(v)\in\{1,(\phi\chi)\}$ $\forall v\in
V(G_{1})$.
\end{enumerate}
\end{corollary}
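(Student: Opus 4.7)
The plan is to deduce $1 \Leftrightarrow 2$ directly from Geelen's result cited in the introduction, which says that PPT operations on mod-2 adjacency matrices of looped simple graphs are generated by non-simple local complementations at looped vertices together with edge pivots at unlooped-unlooped edges; thus the real work is $1 \Leftrightarrow 3$.

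For $1 \Rightarrow 3$ I would realize $\beta$ as a composition of elementary compatible isomorphisms coming from Theorem \ref{lcinv} and Corollary \ref{pivinv}. A non-simple local complementation at a looped $v$ contributes a factor with $f(v) = (\phi\chi)$ and all other values $1$; an edge pivot at an unlooped pair $v, w$ contributes a factor with $f(v) = f(w) = (\phi\chi)$ and all other values $1$. Since $\{1, (\phi\chi)\}$ is a subgroup of $S_{3}$, Lemma \ref{product} ensures the composition still has $f$-values in $\{1, (\phi\chi)\}$.

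The substantive direction is $3 \Rightarrow 1$, which I would prove by induction on $|U|$, where $U = f_{\beta}^{-1}((\phi\chi))$. The base case $|U| = 0$ is Corollary \ref{uniqueness}. For the inductive step, the key observation comes from the following basis-exchange computation: the image $\beta^{-1}(\{v_{\phi} : v \in V(G_{2})\})$ equals $\{v_{\phi} : v \notin U\} \cup \{v_{\chi} : v \in U\}$, which must be a basis of $M(IAS(G_{1}))$. The corresponding $|V(G_{1})| \times |V(G_{1})|$ submatrix of $IAS(G_{1})$ is block-triangular, with an identity block and $A(G_{1})[U, U]$ on the diagonal, so $A(G_{1})[U, U]$ must be non-singular. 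I then split into two subcases.
\textbf{(i)} If some $v_{1} \in U$ is looped in $G_{1}$, apply a non-simple local complementation at $v_{1}$ and use Theorem \ref{lcinv} together with Lemmas \ref{product} and \ref{inverse} to produce a compatible isomorphism $M(IAS((G_{1})_{ns}^{v_{1}})) \to M(IAS(G_{2}))$ whose function agrees with $f_{\beta}$ except at $v_{1}$, where the value has become $(\phi\chi)^{2} = 1$; this shrinks $|U|$ by one.
\textbf{(ii)} If every $v \in U$ is unlooped in $G_{1}$, pick $v_{0} \in U$; non-singularity of $A(G_{1})[U, U]$ together with the vanishing of its diagonal entry at $v_{0}$ forces the existence of $v_{1} \in N_{G_{1}}(v_{0}) \cap U$, necessarily unlooped. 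Applying an edge pivot at $(v_{0}, v_{1})$, Corollary \ref{pivinv} and the composition lemmas yield a compatible isomorphism whose function takes value $1$ at both $v_{0}$ and $v_{1}$ and agrees with $f_{\beta}$ elsewhere, shrinking $|U|$ by two.

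The main obstacle will be subcase (ii): from the abstract hypothesis one must extract an unlooped neighbor of $v_{0}$ inside $U$, and nothing in the statement directly guarantees this. The non-singularity of $A(G_{1})[U, U]$ extracted from the basis property of $\beta^{-1}(\Phi_{G_{2}})$ is precisely the piece of matroid-theoretic information that closes this gap, and it is the reason this corollary requires genuine work beyond a superficial appeal to the earlier classification theorems.
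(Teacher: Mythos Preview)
Your proof is correct and follows essentially the same strategy as the paper. The paper also gets $1\Leftrightarrow 2$ from Geelen, gets $1\Rightarrow 3$ from Theorem~\ref{lcinv} and Corollary~\ref{pivinv}, and for $3\Rightarrow 1$ it inducts on $|U|$, peeling off looped vertices of $U$ via non-simple local complementation exactly as in your subcase~(i); the only difference is that when no looped vertex remains in $U$ the paper simply invokes Theorem~\ref{thmpiv}, whereas you carry out that inductive step by hand, working on the $G_{1}$ side and extracting the needed unlooped neighbor from the non-singularity of $A(G_{1})[U,U]$ rather than from the dependence argument in $M(IAS(G_{2}))$ used in the proof of Theorem~\ref{thmpiv}.
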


\begin{proof}
The equivalence 1$\Leftrightarrow$2 is due to Geelen \cite{G}, and the
implication 1$\Rightarrow$3 follows from Theorems \ref{lcinv} and \ref{thmpiv}.

Suppose condition 3 holds and there are $k$ vertices with $f_{\beta}(v)\neq1$.
If $k=0$ then Corollary \ref{uniqueness} implies that $G_{1}$ and $G_{2}$ are
isomorphic. If $G_{1}$ has a looped vertex $v_{0}$ with $f_{\beta}(v_{0}%
)\neq1$ then there is a compatible isomorphism $\beta^{\prime}:M[IAS((G_{1}%
)_{ns}^{v_{0}})]\rightarrow M[IAS(G_{2})]$ that satisfies condition 3, and for
which only $k-1$ vertices have $f_{\beta^{\prime}}(v)\neq1$. Induction then
tells us that condition 1 holds. If there is no such $v_{0}$, then Theorem
\ref{thmpiv} applies.
\end{proof}

It remains to prove Theorem \ref{lcintro} of the introduction. Let $G_{1}$ and
$G_{2}$ be looped simple graphs. If $G_{2}$ can be obtained from $G_{1}$ using
local complementations and loop complementations then Theorems \ref{thmloop}
and \ref{lcinv} tell us that there is a compatible isomorphism between their
isotropic matroids.

For the converse, suppose there is a compatible isomorphism $\beta
:M[IAS(G_{1})]$ $\rightarrow M[IAS(G_{2})]$, and there are $k$ vertices with
$f_{\beta}(v)\neq1$. Up to isomorphism, we may presume that $V(G_{1}%
)=V(G_{2})$ and the bijection induced by $\beta$ is the identity map. If $k=0$
then Corollary \ref{uniqueness} tells us that $G_{1}=G_{2}$.

The argument proceeds by induction on $k\geq1$. If there is any vertex $v_{0}$
with $f_{\beta}(v_{0})=(\chi\psi)$ then by Theorem \ref{thmloop}, the graph
$G_{2}^{\prime}$ obtained from $G_{2}$ by complementing the loop status of
$v_{0}$ has the property that there is a compatible isomorphism $\beta
^{\prime}:M[IAS(G_{1})]\rightarrow M[IAS(G_{2}^{\prime})]$ such that
$f_{\beta^{\prime}}(v)=f_{\beta}(v)$ $\forall v\neq v_{0}$, and $f_{\beta
^{\prime}}(v_{0})=1$. The inductive hypothesis tells us that up to
isomorphism, $G_{2}^{\prime}$ can be obtained from $G_{1}$ using local
complementations and loop complementations. Of course we can then obtain
$G_{2}$ from $G_{2}^{\prime}$ by loop complementation.

If there is a looped vertex $v_{0}$ with $f_{\beta}(v_{0})=(\phi\chi)$ or an
unlooped vertex $v_{0}$ with $f_{\beta}(v_{0})=(\phi\psi)$, then a similar
argument applies, with $G_{2}^{\prime}=(G_{2})_{ns}^{v_{0}}$.

If there is a looped vertex $v_{0}$ with $f_{\beta}(v_{0})=(\phi\psi)$ or an
unlooped vertex $v_{0}$ with $f_{\beta}(v_{0})=(\phi\chi)$, then the same
argument used in the proof of Theorem \ref{thmpiv} tells us that there is a
vertex $v$ that neighbors $v_{0}$ in $G_{2}$ and has $f_{\beta}(v)\neq1$. Then
the graph $G_{2}^{\prime}=G_{2}^{v_{0}v}$ has the property that there is a
compatible isomorphism $\beta^{\prime}:M[IAS(G_{1})]\rightarrow M[IAS(G_{2}%
^{\prime})]$ such that $f_{\beta^{\prime}}(w)=f_{\beta}(w)$ $\forall
w\notin\{v_{0},v\}$, and $f_{\beta^{\prime}}(v_{0})=1$. The inductive
hypothesis tells us that up to isomorphism, $G_{2}^{\prime}$ can be obtained
from $G_{1}$ using local complementations and loop complementations; of course
we can then obtain $G_{2}$ from $G_{2}^{\prime}$ using local complementations.

Finally, if there is a vertex $v_{0}$ with $f_{\beta}(v_{0})$ a 3-cycle then
the graph $G_{2}^{\prime}$ obtained from $G_{2}$ by complementing the loop
status of $v_{0}$ has the property that there is a compatible isomorphism
$\beta^{\prime}:M[IAS(G_{1})]\rightarrow M[IAS(G_{2}^{\prime})]$ such that
$f_{\beta^{\prime}}(v)=f_{\beta}(v)$ $\forall v\neq v_{0}$, and $f_{\beta
^{\prime}}(v_{0})=(\chi\psi)\cdot f_{\beta}(v_{0})$ is a transposition.
Consequently one of the preceding arguments applies to $G_{2}^{\prime}$.

This completes the proof of Theorem \ref{lcintro}.

\subsection{$M[IA(G)]$}

If $G$ is a looped simple graph then we call the binary matroid $M[IA(G)]$ the
\emph{restricted} isotropic matroid of $G$; it is represented by the
$\left\vert V(G)\right\vert \times(2\left\vert V(G)\right\vert )$ matrix
$IA(G)=(I\mid A(G))$. This use of the term \emph{restricted} is consistent
with Bouchet's use of the term for isotropic systems \cite{B5}. (The
connection between isotropic matroids and isotropic systems is discussed
in\ Section 6.)

Suppose $G_{1}$ and $G_{2}$ are looped simple graphs, and there is a
compatible isomorphism $\beta:M[IAS(G_{1})]\rightarrow M[IAS(G_{2})]$ with
$f_{\beta}(v)\in\{1,(\phi\chi)\}$ $\forall v\in V(G_{1})$. Then $\beta
(v_{\psi})=\beta(v)_{\psi}$ $\forall v\in V(G_{1})$, so $\beta$ restricts to
an isomorphism between the submatroids $M[IA(G_{1})]$ and $M[IA(G_{2})]$.
Moreover, this restriction of $\beta$ is compatible with the natural
partitions of these matroids into pairs, and the restriction determines
$\beta$.

Theorem \ref{pptintro} implies that a simple graph is classified up to pivot
equivalence by compatible isomorphisms of $M[IA(G)]$. Similarly, Theorem
\ref{pptintro2} implies that a looped simple graph is classified up to PPT
equivalence by compatible isomorphisms of $M[IA(G)]$.

A compatible isomorphism $\beta:M[IAS(G_{1})]\rightarrow M[IAS(G_{2})]$ with
$f_{\beta}(v)\in\{1,(\phi\psi)\}$ $\forall v$ can be analyzed by first
applying loop complementation to all vertices in $G_{1}$ and $G_{2}$, and then
analyzing the corresponding compatible isomorphism $\beta^{\prime}%
:M[IAS(G_{1}^{\prime})]\rightarrow M[IAS(G_{2}^{\prime})]$, which has
$f_{\beta^{\prime}}(v)\in\{1,(\phi\chi)\}$ $\forall v\in V(G_{1}^{\prime})$.
Compatible isomorphisms with $f_{\beta}(v)\in\{1,(\chi\psi)\}$ $\forall v$ are
less interesting, as Theorem \ref{thmloop} tells us that they can be realized
using loop complementation.

\section{Non-compatible isomorphisms}

Recall that for each vertex $v$ of a looped simple graph $G$, the ground set
$W(G)$ of $M[IAS(G)]$ has a three-element vertex triple $\{v_{\phi},v_{\chi
},v_{\psi}\}$. The discussion of\ Section 4 relies on the fact that
transpositions of the symbols $\phi$, $\chi$, $\psi$ describe the effects on
these vertex triples of local complementations, loop complementations, and
edge pivots. If two graphs are not related by these graph operations then it
might seem possible for\ their isotropic matroids to be isomorphic, so long as
there is no isomorphism compatible with vertex triples. In fact, however, this
is impossible:

\begin{theorem}
\label{noncom}Let $G_{1}$ and $G_{2}$ be looped simple graphs. If there is an
isomorphism between $M[IAS(G_{1})]$ and $M[IAS(G_{2})]$, then there is a
compatible isomorphism between them.
\end{theorem}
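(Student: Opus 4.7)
The natural strategy is to reduce to Theorem \ref{lciso}: if any matroid isomorphism $\alpha:M(IAS(G_1))\to M(IAS(G_2))$ forces $G_1$ and $G_2$ to be related by local and loop complementations, then Theorem \ref{lciso} supplies a compatible isomorphism. So the core task is to recover the LC/loop-complementation equivalence class of a looped simple graph from its isotropic matroid alone, without assuming compatibility at the outset.

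Starting from an arbitrary matroid isomorphism $\alpha$, I would first normalize it so that it carries the distinguished basis $\Phi_1=\{v_\phi:v\in V(G_1)\}$ to $\Phi_2$. The basis $\alpha(\Phi_1)$ of $M(IAS(G_2))$ is typically not $\Phi_2$, but by Theorem \ref{fund} it can be reached from $\Phi_2$ by a sequence of single-element basis exchanges. Each exchange within a canonical triple $\{v_\phi,v_\chi,v_\psi\}$ realizes a non-simple local complementation of $G_2$ by Theorem \ref{lcinv}, and a swap $v_\chi\leftrightarrow v_\psi$ realizes a loop complementation; together with the pivot construction of Corollary \ref{pivinv}, these operations should suffice to replace $G_2$ by an LC/loop-equivalent graph $G_2'$ so that the resulting map $\alpha':M(IAS(G_1))\to M(IAS(G_2'))$ satisfies $\alpha'(\Phi_1)=\Phi_2'$. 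In particular $\alpha'$ induces a vertex bijection $\sigma:V(G_1)\to V(G_2')$.

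Next I would exploit the linear relation $v_\phi+v_\chi+v_\psi=0$ among columns of $IAS(G_1)$, which transfers to $\alpha'(v_\chi)+\alpha'(v_\psi)=\sigma(v)_\phi=e_{\sigma(v)}$ in $GF(2)^{|V(G_2')|}$. Since $\alpha'(v_\chi)$ and $\alpha'(v_\psi)$ are columns of $IAS(G_2')$ lying outside $\Phi_2'$, and the only ``diagonal'' pairs of non-basis columns summing to a standard basis vector $e_u$ are the pair $\{u_\chi,u_\psi\}$, one expects generically to conclude that $\alpha'(\{v_\chi,v_\psi\})=\{\sigma(v)_\chi,\sigma(v)_\psi\}$ and hence that $\alpha'$ is already compatible. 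Combined with Corollary \ref{uniqueness}, this would give the theorem.

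The main obstacle is the non-generic situation in which $\alpha'(v_\chi)=u_\iota$ and $\alpha'(v_\psi)=u'_{\iota'}$ with $u\neq u'$, which can occur exactly when $A(G_2')$ exhibits a coincidence like $A_u+A_{u'}=e_{\sigma(v)}$. To handle this I would induct on the number of canonical triples on which $\alpha'$ is incompatible, aiming to show that each such incompatibility can be removed by a further pivot or local/loop complementation of $G_2'$, using that the coincidence in $A(G_2')$ itself makes such an operation available. Verifying that the needed operation really exists, and that fixing one incompatibility does not create new ones elsewhere, is the delicate step; if a direct combinatorial argument proves elusive, I would fall back on forward-referencing Section 6, where $M(IAS(G))$ is identified with the isotropic system of $G$, and invoking Bouchet's classification of isotropic systems up to strong isomorphism to conclude LC/loop-complementation equivalence of $G_1$ and $G_2$ directly.
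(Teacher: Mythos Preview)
Your normalization step has a genuine gap: there is no reason for $\alpha(\Phi_1)$ to be a transversal of the canonical partition of $W(G_2)$, and if it is not, no sequence of local or loop complementations on $G_2$ can repair this, since the compatible isomorphisms those operations induce preserve the canonical partition cell-by-cell. Concretely, if $w$ is an unlooped pendant on $u$ in $G_2$ then the columns $w_\chi$ and $u_\phi$ of $IAS(G_2)$ are equal, the transposition $(w_\chi\,u_\phi)$ is a matroid automorphism, and it sends the basis $\Phi_2$ to a basis containing both $w_\phi$ and $w_\chi$. So the step ``replace $G_2$ by $G_2'$ so that $\alpha'(\Phi_1)=\Phi_2'$'' simply cannot be carried out in general. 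The fallback to Section~6 does not help either: the isotropic system $\mathcal{L}(G)=\mathcal{S}(W(G))\cap Z(M(IAS(G)))$ depends on the canonical partition through $\mathcal{S}(W(G))$, so an arbitrary (non-compatible) matroid isomorphism need not send $\mathcal{L}(G_1)$ to $\mathcal{L}(G_2)$, and Bouchet's classification gives you nothing until compatibility is already established.

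What the paper actually does is rather different and avoids normalizing bases altogether. The image under $\alpha$ of the canonical partition of $W(G_1)$ is a partition of $W(G_2)$ into $3$-cells, each of which is a circuit or a loop together with a parallel pair; the paper calls such a partition a \emph{triangulation}. The real work (Lemmas~\ref{lone} and~\ref{ltwo}, Propositions~\ref{triang} and~\ref{constr}) is a detailed case analysis showing that every triangulation can be transformed into the canonical one by a sequence of two very specific moves: swapping two parallel elements of $M(IAS(G_2))$, and ``unbending'' a matched $4$-set (four canonical triples knitted together in one particular pattern). Each such move is shown to extend to a matroid automorphism of $M(IAS(G_2))$, and composing these with $\alpha$ yields the desired compatible isomorphism. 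Your ``non-generic coincidences'' are exactly the phenomena this analysis classifies; the bent $4$-set in particular is a configuration you would not discover by the diagonal-pair argument, and it is the reason the paper's proof is several pages long.
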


\subsection{Triangulations of isotropic matroids}

We prove Theorem \ref{noncom} by carefully analyzing the images of the vertex
triples of $G_{1}$ under a non-compatible isomorphism $M[IAS(G_{1}%
)]\rightarrow M[IAS(G_{2})]$. These images satisfy the following.

\begin{definition}
Let $G$ be a looped simple graph. A partition $P$ of $W(G)$ into three-element
subsets is a \emph{triangulation} if each triple in $P$ contains either a
3-element circuit of $M[IAS(G)]$ or a loop and a pair of non-loop parallels.
\end{definition}

The partition of $W(G)$ into vertex triples is a triangulation, of course. We
call it the \emph{vertex triangulation}. The simplest non-vertex
triangulations of $W(G)$ are obtained from the vertex triangulation by
interchanging parallel elements of $M[IAS(G)]$ from distinct vertex triples.
It is not difficult to see that all such parallels in $M[IAS(G)]$ are
associated with isolated, pendant or twin vertices; we leave the details to
the reader.

Other non-vertex triangulations can be a little more complicated. Suppose $u$,
$v$, $w$ and $x$ are unlooped vertices in $G$ with $N(v)=\{u,w\}$,
$N(w)=\{v,x\}$ and $N(u)-\{v\}=N(x)-\{w\}$. We say $u$, $v$, $w$ and $x$
constitute a \emph{matched 4-path}. A non-vertex triangulation of $W(G)$ may
be obtained from the vertex triangulation by replacing the vertex triples
corresponding to $u$, $v$, $w$ and $x$ with these four triples: $\{u_{\phi
},v_{\chi},w_{\phi}\}$, $\{v_{\phi},w_{\chi},x_{\phi}\}$, $\{u_{\psi},v_{\psi
},x_{\chi}\}$ and $\{u_{\chi},w_{\psi},x_{\psi}\}$. We refer to this
replacement as \emph{bending} the 4-path.

Suppose $G^{\prime}$ is obtained from $G$ using local complementations and
loop complementations, and $u,v,w,x$ is a matched 4-path in $G$. Then we say
$u,v,w,x$ is a \emph{matched 4-set} in $G^{\prime}$. The terminology reflects
the fact that the subgraph of $G^{\prime}$ induced by a matched 4-set need not
be a path. For instance, a matched 4-path $u,v,w,x$ in $G$ yields a 4-cycle in
$G^{vw}$; see Figure \ref{lcmatfig}. The discussion of Section 4 tells us that
there is an isomorphism $\beta:M[IAS(G)]\rightarrow M[IAS(G^{\prime})]$ that
is compatible with the vertex triangulations, so a triangulation $P$ of $W(G)$
induces a triangulation $\beta(P)$ of $W(G^{\prime})$. In particular, if $P$
is a non-vertex triangulation of $W(G)$ in which $u,v,w,x$ is bent, then we
say that $u,v,w,x$ is a bent 4-set in $\beta(P)$.%

%TCIMACRO{\FRAME{ftbpFU}{4.7703in}{1.1078in}{0pt}{\Qcb{A matched 4-path in $G$
%and the corresponding matched 4-set in $G^{vw}$.}}{\Qlb{lcmatfig}%
%}{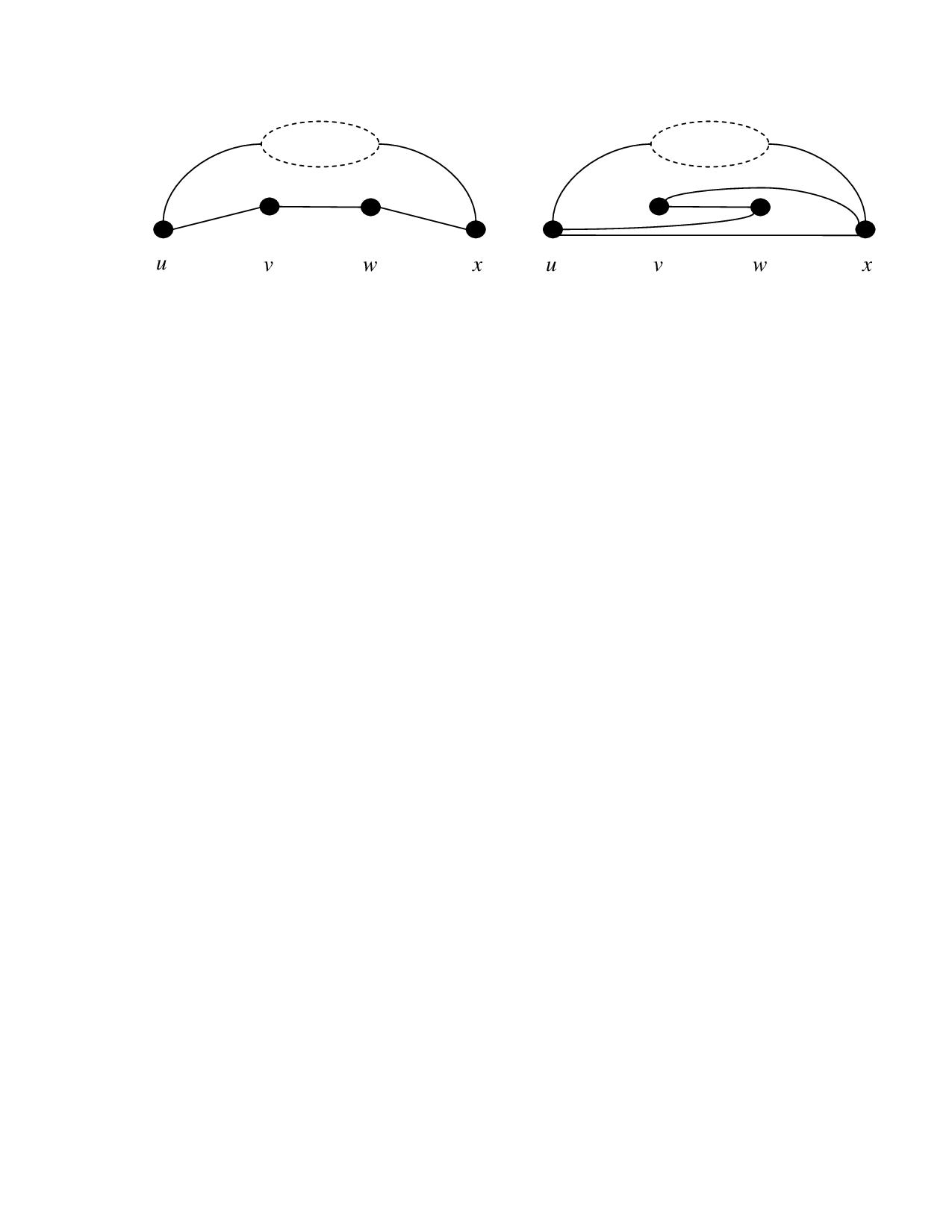}{\special{ language "Scientific Word";  type "GRAPHIC";
%maintain-aspect-ratio TRUE;  display "USEDEF";  valid_file "F";
%width 4.7703in;  height 1.1078in;  depth 0pt;  original-width 8.4968in;
%original-height 11.0056in;  cropleft "0.1342";  croptop "0.9088";
%cropright "0.9317";  cropbottom "0.7686";
%filename 'lcmatfig.ps';file-properties "XNPEU";}} }%
%BeginExpansion
\begin{figure}
[ptb]
\begin{center}
\includegraphics[
trim=1.140271in 8.458904in 0.580332in 1.003711in,
height=1.1078in,
width=4.7703in
]%
{}%
\caption{A matched 4-path in $G$ and the corresponding matched 4-set in
$G^{vw}$.}%
\label{lcmatfig}%
\end{center}
\end{figure}
%EndExpansion

\begin{proposition}
\label{triang}Let $G$ be a looped simple graph, and suppose $P$ is a
non-vertex triangulation of $W(G)$ obtained from the vertex triangulation
either by bending a matched 4-set in $G$ or by interchanging two parallel
elements of $M[IAS(G)]$. Then there is a matroid automorphism $\alpha
:M[IAS(G)]\rightarrow M[IAS(G)]$ such that $\alpha(P)$ is the vertex triangulation.
\end{proposition}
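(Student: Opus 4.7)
The plan is to handle the two forms of $P$ separately. If $P$ arises from interchanging two parallel elements $a, b \in W(G)$, the natural candidate is the transposition $\alpha = (a\ b)$ that swaps $a, b$ and fixes every other element of $W(G)$. Because $a$ and $b$ lie in the same rank-one flat of $M(IAS(G))$, any dependency involving one may be rewritten with the other; hence $\alpha$ preserves the collection of circuits and is a matroid automorphism. Since $\alpha$ reverses the swap that produced $P$, one has $\alpha(P)$ equal to the canonical partition.

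For the bending case, I would first reduce to the situation where $u, v, w, x$ is a matched 4-path in $G$ itself. By the definition of matched 4-set, there is a graph $G_0$, locally and loop equivalent to $G$, in which $u, v, w, x$ is a matched 4-path; Theorem \ref{lciso} supplies a compatible isomorphism $\beta: M(IAS(G)) \to M(IAS(G_0))$. Compatibility ensures that $\beta$ takes the canonical partition of $W(G)$ to that of $W(G_0)$, so $\beta(P)$ is exactly the triangulation $P_0$ of $W(G_0)$ obtained from its canonical partition by bending the matched 4-path. Any automorphism $\alpha_0$ of $M(IAS(G_0))$ with $\alpha_0(P_0)$ canonical then yields the desired automorphism $\alpha = \beta^{-1}\circ\alpha_0\circ\beta$ of $M(IAS(G))$, so it suffices to construct $\alpha_0$.

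To construct $\alpha_0$, write $S = N_{G_0}(u)-\{v\} = N_{G_0}(x)-\{w\}$ and seek $\alpha_0$ as a composition of a linear transformation $T$ of $\mathbb{F}_2^{V(G_0)}$, whose action on columns of $IAS(G_0)$ permutes them, with some swaps of parallel elements of $M(IAS(G_0))$. The matched-4-set condition is the key tool here: it forces every column $y_\chi$ for $y\notin\{u,v,w,x\}$ to have matching entries in rows $u, x$ and in rows $v, w$, so the row swap $e_u\leftrightarrow e_x$, $e_v\leftrightarrow e_w$ fixes all such columns and does induce a matroid automorphism of $M(IAS(G_0))$. This swap alone, however, only permutes the four bent cells among themselves; the actual unbending $T$ must combine it with additional row operations (and possibly parallel-class swaps in the degenerate case) so that each of $\{u_\phi, v_\chi, w_\phi\}$, $\{v_\phi, w_\chi, x_\phi\}$, $\{u_\psi, v_\psi, x_\chi\}$, $\{u_\chi, w_\psi, x_\psi\}$ is sent to one of the canonical cells $\{y_\phi, y_\chi, y_\psi\}$ for $y\in\{u,v,w,x\}$. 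The main obstacle will be exhibiting this compound $T$ in full generality, i.e., for arbitrary $S$, and verifying that the induced column permutation realizes the prescribed cell mapping; once a formula for $T$ is in hand, checking column preservation (using the matched-4-set hypothesis on the $S$-rows) and the action on the four bent cells reduces to a direct calculation.
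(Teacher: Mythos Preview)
Your treatment of the parallel-swap case and your reduction from a matched 4-set to a matched 4-path via a compatible isomorphism $\beta$ are both correct and are exactly what the paper does.

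The gap is in the 4-path case. You outline a strategy---find a linear $T\in GL(\mathbb{F}_2^{V(G_0)})$ that permutes the columns of $IAS(G_0)$---but you stop at the acknowledged ``main obstacle'' of actually producing $T$. That step \emph{is} the proof; everything else is bookkeeping. The paper does not search for $T$ at all: it simply writes down an explicit permutation
\[
\alpha=(u_{\phi}x_{\phi})(u_{\chi}v_{\phi})(u_{\psi}w_{\chi})(v_{\chi}x_{\psi})(v_{\psi}w_{\psi})(w_{\phi}x_{\chi})
\]
of $W(G_0)$ and then verifies directly that $\alpha(C(z,\Phi))=C(\alpha(z),\alpha(\Phi))$ for every $z\notin\Phi$, where $\Phi=\{t_\phi\}$ is the standard basis. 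Since fundamental circuits with respect to one basis determine a binary matroid, this check (which the paper carries out in a short table) establishes that $\alpha$ is an automorphism, and one reads off immediately that $\alpha$ sends the four bent cells to the four canonical cells of $u,v,w,x$.

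Your linear-map framing is in fact compatible with this: the paper's $\alpha$ \emph{is} induced by an involutive $T\in GL(\mathbb{F}_2^{V(G_0)})$, namely
\[
T(e_u)=e_x,\quad T(e_x)=e_u,\quad T(e_v)=e_v+\textstyle\sum_{s\in S}e_s,\quad T(e_w)=e_w+\textstyle\sum_{s\in S}e_s,\quad T(e_y)=e_y\ \text{otherwise.}
\]
Note that the $e_v\leftrightarrow e_w$ swap you start from is \emph{not} part of the correct $T$; the $v$ and $w$ coordinates stay put and instead pick up the $S$-sum. Your preliminary swap is a genuine automorphism (as you observed, it permutes the bent cells among themselves), but it points you away from the answer rather than toward it. No parallel-class corrections are needed in the 4-path case. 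Once you have either the explicit $\alpha$ or this $T$, the verification is the routine calculation you anticipated.
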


\begin{proof}
If $x$ and $y$ are parallel elements of a matroid, then the transposition
$(xy)$ is a matroid automorphism.

Suppose $u,v,w,x$ is a matched 4-path in $G$, and $P$ is obtained from the
vertex triangulation by bending the 4-path. Let $\alpha:W(G)\rightarrow W(G)$
be the permutation
\[
\alpha=(u_{\phi}x_{\phi})(u_{\chi}v_{\phi})(u_{\psi}w_{\chi})(v_{\chi}x_{\psi
})(v_{\psi}w_{\psi})(w_{\phi}x_{\chi}).
\]

As $\alpha$ is a bijection, to show that it defines an automorphism of the
matroid $M[IAS(G)]$ it suffices to verify that $\alpha$ is linear on the
columns of $IAS(G)$. As $\Phi=\{t_{\phi}\mid t\in V(G)\}$ is a basis of the
column space of $M[IAS(G)]$, we may verify linearity by checking that $\alpha$
is consistent with expressions of columns outside $\Phi$ as linear
combinations of elements of $\Phi$. This property is obvious for columns that
correspond to vertices outside $\{u,v,w,x\}$; these columns are fixed by
$\alpha$, the $u$ and $x$ entries in these columns are always equal, and
$\alpha$ transposes $u_{\phi}$ and $x_{\phi}$. The remaining columns outside
$\Phi$ are the $\chi$ and $\psi$ columns corresponding to $u$, $v$, $w$ and
$x$. It is a simple matter to verify the eight corresponding equalities
individually: if we let $N$ denote the column vector corresponding to
$N(u)-\{v\}=N(x)-\{w\}$ then $u_{\chi}=N+v_{\phi}$ and $\alpha(N)+\alpha
(v_{\phi})=N+u_{\chi}=N+(N+v_{\phi})=v_{\phi}=\alpha(u_{\chi})$; $u_{\psi
}=N+u_{\phi}+v_{\phi}$ and $\alpha(N)+\alpha(u_{\phi})+\alpha(v_{\phi
})=N+x_{\phi}+u_{\chi}=N+x_{\phi}+N+v_{\phi}=x_{\phi}+v_{\phi}=w_{\chi}%
=\alpha(u_{\psi})$; $v_{\chi}=u_{\phi}+w_{\phi}$ and $\alpha(u_{\phi}%
)+\alpha(w_{\phi})=x_{\phi}+x_{\chi}=x_{\psi}=\alpha(v_{\chi})$; etc.

As $\alpha(P)$ is the vertex triangulation, $\alpha$ satisfies the proposition.

Suppose now that $u,v,w,x$ is a matched 4-set in $G$, and $P$ is obtained from
the vertex triangulation by bending the 4-set. Then there is a looped simple
graph $G^{\prime}$ obtained from $G$ by some sequence of local
complementations and loop complementations, such that the resulting compatible
automorphism $\beta:M[IAS(G)]\rightarrow M[IAS(G^{\prime})]$ has the property
that $u,v,w,x$ is a bent 4-path in $\beta(P)$. We have just verified that
there is a matroid automorphism $\alpha:M[IAS(G^{\prime})]\rightarrow
M[IAS(G^{\prime})]$ under which the image of $\beta(P)$ is the vertex
triangulation. Then $\beta^{-1}\alpha\beta$ is an automorphism of $M[IAS(G)]$
that satisfies the proposition.
\end{proof}

\subsection{Theorem \ref{noncom}}

Most of our proof of Theorem \ref{noncom} is devoted to showing that every
non-vertex triangulation of an isotropic matroid can be built by interchanging
parallels and bending 4-sets.

\begin{lemma}
\label{lone}Let $G$ be a looped simple graph, and let $P$ be a non-vertex
triangulation of $W(G)$. Suppose no non-vertex triple of $P$ contains two
elements of $W(G)$ that correspond to the same vertex of $G$. Then there is a
sequence $\Sigma$ of local complementations and loop complementations such
that the graph $G^{\prime}$ obtained by applying $\Sigma$ to $G$ has an
unlooped degree-2 vertex $w$ with $\beta_{\Sigma}^{-1}(\{w_{\chi},v_{\phi
},x_{\phi}\})\in P$. Here $N_{G^{\prime}}(w)=\{v,x\}$ and $\beta_{\Sigma
}:M[IAS(G)]\rightarrow M[IAS(G^{\prime})]$ is the compatible isomorphism
induced by $\Sigma$.
\end{lemma}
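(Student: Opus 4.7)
The plan is to start by ruling out all non-$3$-circuit cells. Matroid loops of $M(IAS(G))$ come only from isolated vertices of $G$ ($v_{\chi}=0$ iff $v$ is isolated and unlooped, $v_{\psi}=0$ iff $v$ is isolated and looped), and for an isolated vertex $u$ the nonzero value $e_{u}$ appears as a column of $IAS(G)$ only among $u$'s own three positions, since $u$ admits no pendants or twins. A short case analysis then forces $u_{\phi}$ to sit in the canonical cell $\{u_{\phi},u_{\chi},u_{\psi}\}$: it cannot lie in a non-canonical $3$-circuit, because the remaining two summands would have to add up to $e_{u}$, which is impossible for columns supported off $u$; and it cannot lie in a non-canonical loop-plus-parallels cell, because the required second parallel would have to be $u_{\chi}$ or $u_{\psi}$, both forbidden by the hypothesis. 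Hence every non-canonical cell of $P$ avoids isolated vertices, contains no matroid loop, and must be a $3$-element circuit whose three elements come from three distinct vertices.

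Fix such a cell $C=\{(t_{1})_{\iota_{1}},(t_{2})_{\iota_{2}},(t_{3})_{\iota_{3}}\}$. The strategy is to build $\Sigma$ so that the induced compatible isomorphism $\beta_{\Sigma}$ maps $C$ onto $\{w_{\chi},v_{\phi},x_{\phi}\}$ in $M(IAS(G'))$; because $\beta_{\Sigma}$ preserves $3$-circuits, the identity $A(G')e_{w}=e_{v}+e_{x}$ will then hold in $G'$, forcing $N_{G'}(w)=\{v,x\}$ with $w$ unlooped, exactly as required. First, a loop complementation at each $t_{j}$ with $\iota_{j}=\psi$ rewrites that label as $\chi$ by Theorem \ref{thmloop}, so without loss of generality $\iota_{j}\in\{\phi,\chi\}$. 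The $3$-circuit identity $(t_{1})_{\iota_{1}}+(t_{2})_{\iota_{2}}+(t_{3})_{\iota_{3}}=0$ forbids three $\phi$-labels, so the number of $\chi$-labels is $1$, $2$, or $3$; when it is $1$ the cell already has the target form, with the unique $\chi$-vertex serving as $w$.

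It remains to reduce the $\chi$-count by one. By Theorem \ref{lcinv}, a non-simple local complementation at a looped vertex $t_{j}$ contributes $(\phi\chi)$ to $f_{\beta_{\Sigma}}(t_{j})$ and so turns its $\chi$-label into $\phi$. If $t_{j}$ is unlooped, one first flips its loop status by a non-simple local complementation at some neighbor of $t_{j}$; this leaves $f_{\beta_{\Sigma}}(t_{j})$ unchanged, and such a neighbor exists because $t_{j}$ is non-isolated. The main technical obstacle is the case when every neighbor of $t_{j}$ lies inside $\{t_{1},t_{2},t_{3}\}$, since then the auxiliary operation also perturbs the role permutation at another cell vertex $t_{i}$; the plan is to address this through a short case analysis on which $\iota_{i}$ are $\phi$ versus $\chi$ and on the adjacencies among $t_{1},t_{2},t_{3}$, exploiting the freedom in which vertex is eventually assigned the role of $w$ so that the induced transpositions at the other cell vertices can be arranged to send their labels to $\phi$ simultaneously. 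A finite induction on the $\chi$-count then produces the required $\Sigma$ together with $G'$, $w$, $v$, and $x$.
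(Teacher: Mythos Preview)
Your outline is correct and close in spirit to the paper's argument, but organized differently. The paper first applies loop complementations to remove \emph{all} loops of $G$, then picks a vertex $v$ whose canonical cell is not in $P$ and examines the cell $\{v_{\phi},a_{\gamma},b_{\delta}\}$ of $P$ containing $v_{\phi}$. Having one $\phi$-label from the start, the paper cases directly on $\gamma$: the zero-sum constraint forces $(\gamma,\delta)$ to be one of $(\phi,\chi)$, $(\chi,\phi)$, $(\psi,\psi)$ or $(\chi,\chi)$, and each case is dispatched with at most two explicit simple local complementations. Your alternative organization (kill the $\psi$-labels first, then drive the $\chi$-count down to one) reaches the same endpoint, and your first paragraph eliminating loop-plus-parallel cells is a nice preliminary the paper handles only implicitly.

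The one real gap is that you defer the ``short case analysis'' in your third paragraph without carrying it out, and the obstacle you flag is genuine: if the only available auxiliary neighbour is the $\phi$-vertex $t_{3}$, its role permutation $(\phi\chi)$ or $(\phi\psi)$ will either raise the $\chi$-count or reintroduce a $\psi$. The resolution requires the circuit equation, not just non-isolatedness. With two $\chi$'s and one $\phi$, the identity $A_{\cdot,t_{1}}+A_{\cdot,t_{2}}=e_{t_{3}}$ read in rows $t_{1},t_{2},t_{3}$ gives: $t_{1}$ is looped iff $t_{1}\sim t_{2}$ iff $t_{2}$ is looped, and exactly one of $t_{1},t_{2}$ is adjacent to $t_{3}$. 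So either both $\chi$-vertices are already looped and you are done, or both are unlooped and the one \emph{not} adjacent to $t_{3}$ (say $t_{2}$) has all its neighbours external; since $t_{2}$ is non-isolated you flip its loop via an external neighbour and then complement at $t_{2}$. With three $\chi$'s and all $t_{j}$ unlooped, the row constraints force the induced subgraph on $\{t_{1},t_{2},t_{3}\}$ to be empty or a triangle; in the empty case every neighbour is external, and in the triangle case the auxiliary complement at an unlooped $\chi$-vertex $t_{i}$ contributes $(\phi\psi)$, which fixes $\chi$, so no label is disturbed. You should spell this out rather than leave it as a promise.
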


\begin{proof}
To reduce the number of cases that must be considered, we perform loop
complementations to remove all loops in $G$.

Let $v$ be a vertex of $G$ such that $\{v_{\phi},v_{\chi},v_{\psi}\}\notin P$.
Then $P$ contains a triple $\{v_{\phi},a_{\gamma},b_{\delta}\}$ with $a\neq
b\neq v\neq a$ and $\gamma,\delta\in\{\phi,\chi,\psi\}$. This triple is either
a circuit of $M[IAS(G)]$ or the union of two disjoint circuits, so the
corresponding columns of $IAS(G)$ must sum to 0.

If $\gamma=\phi$ then the $b_{\delta}$ column of $IAS(G)$ must have nonzero
entries in the $a$ and $v$ columns, and not in any other columns; necessarily
then $\delta=\chi$ and $N(b)=\{a,v\}$. Similarly, if $\delta=\phi$ then
$\gamma=\chi$ and $N(a)=\{b,v\}$.

Suppose now that $\gamma=\psi$. The $a$ entry of the $v_{\phi}$ column of
$IAS(G)$ is 0, and the $a$ entry of the $a_{\psi}$ column is 1, so the $a$
entry of the $b_{\delta}$ column must be 1. It follows that $a$ and $b$ are
neighbors in $G$, so the $b$ entry of the $a_{\psi}$ column of $IAS(G)$ is 1.
Then the $b$ entry of the $b_{\delta}$ column must also be 1, so $\delta=\psi
$. The $v$ entry of the $v_{\phi}$ column of $IAS(G)$ is 1, so precisely one
of $a,b$ is a neighbor of $v$; say $a\in N(v)$ and $b\notin N(v)$. All in all,
we have $\gamma=\delta=\psi$, $v\in N(a)$ and $N(b)=(N(a)\cup\{a\})-\{b,v\}$.
It follows that in $G_{s}^{b}$, $a$ is an unlooped degree-2 vertex whose only
neighbors are $b$ and $v$. Theorems \ref{thmloop} and \ref{lcinv} tell us that
there is a compatible isomorphism $\beta:M[IAS(G)]\rightarrow M[IAS(G_{s}%
^{b})]$ whose associated map $f_{\beta}:V(G)\rightarrow S_{3}$ has $f_{\beta
}(v)=1$, $f_{\beta}(a)=(\chi\psi)$ and $f_{\beta}(b)=(\phi\psi)$, so
$\beta(\{v_{\phi},a_{\gamma},b_{\delta}\})=\{v_{\phi},a_{\chi},b_{\phi}\}$.

Finally, suppose $\gamma=\chi$. The $a$ entry of the $v_{\phi}$ column of
$IAS(G)$ is 0, and the $a$ entry of the $a_{\chi}$ column is 0, so the $a$
entry of the $b_{\delta}$ column must be 0. It follows that $a$ and $b$ are
not neighbors in $G$, so the $b$ entry of the $a_{\chi}$ column of $IAS(G)$ is
0. Then the $b$ entry of the $b_{\delta}$ column must also be 0, so
$\delta=\chi$. The $v$ entry of the $v_{\phi}$ column of $IAS(G)$ is 1, so
precisely one of $a,b$ is a neighbor of $v$; say $a\in N(v)$ and $b\notin
N(v)$. All in all, we have $\gamma=\delta=\chi$, $v\in N(a)$ and
$N(b)=N(a)-\{v\}$. If $N(b)$ is empty then the only columns of $IAS(G)$ with
nonzero entries in the $b$ row are the $b_{\phi}$ and $b_{\psi}$ columns, so
$b_{\phi}$ and $b_{\psi}$ must appear together in a\ triple of $P$; this must
be a non-vertex triple as it does not contain $b_{\chi}$, so it violates the
hypothesis that no non-vertex triple of $P$ contains two elements of $W(G)$
corresponding to the same vertex of $G$. By contradiction, then, $N(b)$ is not
empty. If $y\in N(b)$ then Theorems \ref{thmloop} and \ref{lcinv} tell us that
there is a compatible isomorphism $\beta:M[IAS(G)]\rightarrow M[IAS((G_{s}%
^{y})_{s}^{a}]$ whose associated map $f_{\beta}:V(G)\rightarrow S_{3}$ has
$f_{\beta}(v)=(\chi\psi)$, $f_{\beta}(a)=(\phi\psi)(\chi\psi)=(\phi\psi\chi)$
and $f_{\beta}(b)=(\chi\psi)^{2}=1$, so $\beta(\{v_{\phi},a_{\gamma}%
,b_{\delta}\})=\{v_{\phi},a_{\phi},b_{\chi}\}$.
\end{proof}

\begin{lemma}
\label{ltwo}Let $G$ be a looped simple graph, and let $P$ be a non-vertex
triangulation of $W(G)$. Suppose no non-vertex triple of $P$ contains two
elements of $W(G)$ that correspond to the same vertex of $G$. Then either
there is a bent 4-set in $P$, or there is a bent 4-set in a non-vertex
triangulation $P^{\prime}$ obtained from $P$ by interchanging two parallel
elements of $M[IAS(G)]$.
\end{lemma}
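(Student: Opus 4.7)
The plan is to apply Lemma \ref{lone} to normalize $P$ near a distinguished degree-$2$ vertex and then analyze the circuit structure imposed on $P$ by the column sums of $IAS(G)$ at the three vertices $v$, $w$, $x$.

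First, invoke Lemma \ref{lone} to produce a sequence $\Sigma$ of local and loop complementations such that the graph $G' = \Sigma(G)$ has an unlooped degree-$2$ vertex $w$ with $N_{G'}(w) = \{v, x\}$, and such that $\{w_\chi, v_\phi, x_\phi\}$ is a cell of the transported triangulation $\beta_\Sigma(P)$. Because $\beta_\Sigma$ is a compatible isomorphism, it carries triangulations to triangulations, parallel pairs to parallel pairs, and (by definition of a bent $4$-set) bent $4$-sets in $P$ to bent $4$-sets in $\beta_\Sigma(P)$. So it is enough to prove the lemma for $G'$ and $\beta_\Sigma(P)$; rename and assume these properties hold for $G$ and $P$ themselves. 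A further loop-complementation at $v$ and/or $x$ if needed (each of which swaps $\chi \leftrightarrow \psi$ at that vertex but fixes $v_\phi$ and $x_\phi$, hence fixes the distinguished cell) then allows me to assume $v$ and $x$ are unlooped as well.

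Under the hypothesis that no non-canonical cell contains two elements corresponding to one vertex, the six elements $w_\phi, w_\psi, v_\chi, v_\psi, x_\chi, x_\psi$ lie in non-canonical cells of $P$, no two elements from the same vertex belonging to a common cell. The task is to show that these six elements pair up into exactly three cells matching the bent $4$-path pattern
\[
\{u_\phi, v_\chi, w_\phi\},\ \{u_\psi, v_\psi, x_\chi\},\ \{u_\chi, w_\psi, x_\psi\}
\]
for some common vertex $u$, possibly after interchanging a single parallel pair. I would begin with the cell $C$ containing $w_\phi$: it is a $3$-element circuit $\{w_\phi, a_\gamma, b_\delta\}$ with column sum $a_\gamma + b_\delta = w_\phi$. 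A case analysis on $(\gamma, \delta)$ in the style of the proof of Lemma \ref{lone} shows that either $C$ already has the form $\{u_\phi, v_\chi, w_\phi\}$ with $N(v) = \{u, w\}$ (or the symmetric form $\{u_\phi, w_\phi, x_\chi\}$ with $N(x) = \{u, w\}$, handled by swapping the roles of $v$ and $x$), or $(\gamma, \delta) \in \{\chi, \psi\}^2$; in the latter case the circuit identity combined with the constraints on the other five special elements forces one of the columns in $C$ to be parallel to a column of a canonical cell, and the single parallel interchange permitted by the lemma replaces $C$ by the desired form.

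Having identified $u$, I examine the cells containing $v_\psi$ and $w_\psi$. The relations $v_\psi = u_\phi + v_\phi + w_\phi$ (since $N(v) = \{u, w\}$) and $w_\psi = v_\phi + w_\phi + x_\phi$ (since $N(w) = \{v, x\}$) together with the targets $\{u_\psi, v_\psi, x_\chi\}$ and $\{u_\chi, w_\psi, x_\psi\}$ both reduce to the single condition $N(u) \Delta N(x) = \{v, w\}$, i.e., $N(u) - \{v\} = N(x) - \{w\}$ — exactly the matching condition for a matched $4$-path. An enumeration of alternative circuit possibilities for these two cells, using the no-repeated-vertex hypothesis, forces them into the stated form and simultaneously verifies the matching condition. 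The main obstacle is the case analysis for the cell $C$ containing $w_\phi$: pinning down precisely which parallel interchange to perform in each problematic subcase $(\gamma, \delta) \in \{\chi, \psi\}^2$, and verifying that the chosen interchange is also compatible with the required structure of the cells containing $v_\psi$ and $w_\psi$, so that a single parallel swap suffices for all three.
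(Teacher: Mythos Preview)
Your opening reduction via Lemma \ref{lone} is correct, and analysing the cell $C=\{w_\phi,a_\gamma,b_\delta\}$ containing $w_\phi$ is the right next step.  The paper does the same, and as you say one easily finds $b\in\{v,x\}$ and $\delta\in\{\chi,\psi\}$.

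The gap is your handling of the cases $(\gamma,\delta)\in\{\chi,\psi\}^2$.  You assert that ``one of the columns in $C$ is parallel to a column of a canonical cell,'' and that a single parallel swap then brings $C$ into the target form $\{u_\phi,v_\chi,w_\phi\}$.  This is not true in general.  For instance when $C=\{w_\phi,y_\psi,v_\psi\}$ (the paper's Case~2) the circuit relation forces $y\in N(v)$ and $N(y)=(N(v)\cup\{v\})-\{y,w\}$, but neither $y_\psi$ nor $v_\psi$ need be parallel to anything.  The paper resolves this case not by a parallel swap but by a \emph{further} simple local complementation at $y$, which converts $\{w_\phi,y_\psi,v_\psi\}$ to $\{w_\phi,y_\phi,v_\chi\}$; similarly Case~3 ($C=\{w_\phi,y_\chi,v_\chi\}$) is reduced to Case~2 by a local complementation at some $u\in N(y)$.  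Since a bent $4$-\emph{set} is by definition the image of a bent $4$-path under compatible isomorphisms, these extra local complementations are free---you simply forgot to keep using them after the initial normalisation.

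Even once you reach the good form $\{u_\phi,v_\chi,w_\phi\}$ with $N(v)=\{u,w\}$, your last paragraph is too optimistic.  The paper finds a genuine bifurcation depending on whether $x$ and $u$ are adjacent: if not, the four cells assemble directly into a bent $4$-path; if they are adjacent, one must perform a pivot $G^{vw}$ before the pattern appears.  And the only place a parallel interchange is actually required is a terminal subcase of Case~3 in which $\{v,w,x,y\}$ is an entire connected component with $v_\psi\parallel w_\psi$; there the swap of $v_\psi$ and $w_\psi$ is followed by yet another local complementation.  So the single permitted parallel swap is used for one very specific obstruction, not as a substitute for the local-complementation reductions.
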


\begin{proof}
By Lemma \ref{lone}, after local complementations and loop complementations we
may presume that $G$ has no looped vertex, $G$ has a degree-2 vertex $w$, and
that $P$ includes the non-vertex triple $\{w_{\chi},v_{\phi},x_{\phi}\}$ where
$N(w)=\{v,x\}$. Then $P$ also includes a triple $\{w_{\phi},y_{\gamma
},z_{\delta}\}$ with $w\neq y\neq z\neq w$. As the $w$ entry of the $w_{\phi}$
column of $IAS(G)$ is 1, either the $y_{\gamma}$ or the $z_{\delta}$ column
also has its $w$ entry equal to 1; say it is the $z_{\delta}$ column. Then
$\delta\in\{\chi,\psi\}$ and $z\in N(w)$, so $z\in\{v,x\}$; say $z=v$. Notice
that if $y=x$ then $\gamma\neq\phi$, as $x_{\phi}$ appears in the triple
$\{w_{\chi},v_{\phi},x_{\phi}\}$; but then every element of $\{w_{\phi
},y_{\gamma},z_{\delta}\}$ corresponds to a column of $IAS(G)$ whose $w$ entry
is 1, an impossibility as $\{w_{\phi},y_{\gamma},z_{\delta}\}$ is a circuit or
a disjoint union of circuits in $M[IAS(G)]$. Consequently $y\neq x$.

Summing up: $P$ contains the triples $\{w_{\chi},v_{\phi},x_{\phi}\}$ and
$\{w_{\phi},y_{\gamma},v_{\delta}\}$ with $N(w)=\{v,x\}$ and $y\notin
\{v,w,x\}$.

Case 1: If $\gamma=\phi$ then since $\{w_{\phi},y_{\phi},v_{\delta}\}$ is a
triple of $P$, it must be that $\delta=\chi$ and $N(v)=\{w,y\}$. Among the
elements of $W(G)$ not included in $\{w_{\phi},y_{\phi},v_{\chi}\}$ or
$\{w_{\chi},v_{\phi},x_{\phi}\}$, only four correspond to columns of $IAS(G)$
with nonzero $w$ entries, namely $v_{\psi}$, $w_{\psi}$, $x_{\chi}$ and
$x_{\psi}$. Consequently two of these must appear together in one triple of
$P$, and the other two in another triple. Similarly, among the elements of
$W(G)$ not included in $\{w_{\phi},y_{\phi},v_{\chi}\}$ or $\{w_{\chi}%
,v_{\phi},x_{\phi}\}$, only four correspond to columns of $IAS(G)$ with
nonzero $v$ entries, namely $v_{\psi}$, $w_{\psi}$, $y_{\chi}$ and $y_{\psi}$;
these also must appear in two triples of $P$, with two in each triple. By
hypothesis, $x_{\chi}$ and $x_{\psi}$ do not appear in the same triple of $P$;
nor do $y_{\chi}$ and $y_{\psi}$. Consequently $P$ has two triples of the form%
\[
\{\text{one of }x_{\chi},x_{\psi}\}\cup\{\text{one of }y_{\chi},y_{\psi}%
\}\cup\{\text{one of }v_{\psi},w_{\psi}\}\text{.}%
\]
As $N(v)\cup N(w)=\{v,w,x,y\}$ and the sum of the columns of $IAS(G)$
corresponding to a triple of $P$ must be 0, it follows that
$N(x)-\{v,w,x,y\}=N(y)-\{v,w,x,y\}$.

If $x$ and $y$ are not adjacent in $G$ then among the elements $x_{\chi}$,
$x_{\psi}$, $y_{\chi}$, $y_{\psi}$, $v_{\psi}$ and $w_{\psi}$, the only ones
that correspond to columns of $IAS(G)$ with nonzero $x$ entries are $x_{\psi}$
and $w_{\psi}$; so they must appear in the same triple. Similarly, $y_{\psi}$
and $v_{\psi}$ must appear in the same triple. Consequently $\{x_{\psi
},w_{\psi},y_{\chi}\}$ and $\{x_{\chi},v_{\psi},y_{\psi}\}$ are triples of
$P$, so $y,v,w,x$ is a bent 4-path in $P$.

On the other hand, if $x$ and $y$ are adjacent in $G$ then among the elements
$x_{\chi}$, $x_{\psi}$, $y_{\chi}$, $y_{\psi}$, $v_{\psi}$ and $w_{\psi}$, the
only ones that correspond to columns of $IAS(G)$ with $x$ entries equal to 0
are $x_{\chi}$\ and $v_{\psi}$; these cannot appear in the same triple of $P$.
Similarly, the only ones that correspond to columns of $IAS(G)$ with $y$
entries equal to 0 are $y_{\chi}$\ and $w_{\psi}$; and these cannot appear in
the same triple. Consequently $\{x_{\chi},w_{\psi},y_{\psi}\}$ and $\{x_{\psi
},y_{\chi},v_{\psi}\}$ are triples of $P$. In this situation $y,v,w$ and $x$
are the vertices of a 4-cycle of $G$, in this order, with $v$ and $w$ of
degree two and $N(x)-\{w,y\}=N(y)-\{v,x\}$. Then $y$, $w$, $v$, $x$ is\ a
matched 4-path in $G^{vw}$. Corollary \ref{pivinv} tells us that there is a
compatible isomorphism $\beta:M[IAS(G)]\rightarrow M[IAS(G^{vw})]$ whose
associated map $f_{\beta}:V(G)\rightarrow S_{3}$ has $f_{\beta}(v)=f_{\beta
}(w)=(\phi\chi)$ and $f_{\beta}(x)=f_{\beta}(y)=1$, so $\beta(P)$ is a
triangulation of $W(G^{vw})$ with triples $\{w_{\phi},v_{\chi},x_{\phi}\}$,
$\{w_{\chi},y_{\phi},v_{\phi}\}$, $\{x_{\chi},w_{\psi},y_{\psi}\}$ and
$\{x_{\psi},y_{\chi},v_{\psi}\}$. Hence $y$, $w$, $v$, $x$ is a bent 4-path in
$\beta(P)$.

Case 2: If $\gamma=\psi$ then since $\{w_{\phi},y_{\psi},v_{\delta}\}$ is a
triple of $P$ and the $y$ entry of the column of $IAS(G)$ corresponding to
$w_{\phi}$ is 0, it must be that $v\in N(y)$. Then the $v$ entry of the column
corresponding to $y_{\gamma}$ is 1, so $\delta=\psi$ and $N(y)=(N(v)\cup
\{v\})-\{y,w\}$. Theorems \ref{thmloop} and \ref{lcinv} tell us that there is
a compatible isomorphism $\beta:M[IAS(G)]\rightarrow M[IAS(G_{s}^{y})]$ whose
associated map $f_{\beta}:V(G)\rightarrow S_{3}$ has $f_{\beta}(y)=(\phi\psi
)$, $f_{\beta}(z)=(\chi\psi)$ for $z\in N(y)$, and $f_{\beta}(z)=1$ for
$z\notin N(y)\cup\{y\}$. As $w\notin N(y)\cup\{y\}$, it follows that
$\beta(P)$ is a triangulation of $W(G_{s}^{y})$ that contains the triples
$\{w_{\chi},v_{\phi},x_{\phi}\}$ and $\{w_{\phi},y_{\phi},v_{\chi}\}$. That
is, Case 1 holds in $G_{s}^{y}$.

Case 3: Suppose $\gamma=\chi$. As $\{w_{\phi},y_{\gamma},v_{\delta}\}$ is a
triple of $P$ and the $y$ entry of the column of $IAS(G)$ corresponding to
$w_{\phi}$ is 0, it must be that $v\not \in N(y)$. Then the $v$ entry of the
column corresponding to $y_{\gamma}$ is 0, so $\delta=\chi$ and
$N(y)=N(v)-\{w\}$. If $N(y)$ is empty then the $y_{\phi}$ and $y_{\psi}$
columns of $IAS(G)$ are the only ones with nonzero $y$ entries, so $y_{\phi}$
and $y_{\psi}$ must appear in the same triple of $P$. This triple doesn't
contain $y_{\chi}$, so it is not a vertex triple; but no such triple exists,
by hypothesis. Consequently $N(y)$ is not empty.

If $x\neq u\in N(y)$ then $u\notin\{x,v\}=N(w)$, so Theorems \ref{thmloop} and
\ref{lcinv} tell us that there is a compatible isomorphism $\beta
:M[IAS(G)]\rightarrow M[IAS(G_{s}^{u})]$ whose associated map $f_{\beta
}:V(G)\rightarrow S_{3}$ has $f_{\beta}(u)=(\phi\psi)$, $f_{\beta}(w)=1$,
$f_{\beta}(v)=$ $f_{\beta}(y)=(\chi\psi)$ and $f_{\beta}(z)\in\{1,(\chi
\psi)\}$ for $z\notin\{u,v,w,y\}$. As $P$ contains the triples $\{w_{\chi
},v_{\phi},x_{\phi}\}$ and $\{w_{\phi},y_{\chi},v_{\chi}\}$, it follows that
$\beta(P)$ contains the triples $\{w_{\chi},v_{\phi},x_{\phi}\}$ and
$\{w_{\phi},y_{\psi},v_{\psi}\}$. That is, Case 2 holds in $G_{s}^{u}$.

It remains to consider the possibility that $N(y)=\{x\}$. Then the only
columns of $IAS(G)$ with nonzero $y$ entries are those corresponding to
$y_{\phi}$, $y_{\psi}$, $x_{\chi}$ and $x_{\psi}$, so there must be two
triples of $P$ each of which contains one of $y_{\phi}$, $y_{\psi}$ and one of
$x_{\chi}$, $x_{\psi}$. Also, the fact that $\{w_{\phi},y_{\chi},v_{\chi}\}$
is a triple of $P$ implies that $N(v)=\{w,x\}$; hence the only columns of
$IAS(G)$ with nonzero $v$ entries are those corresponding to $v_{\phi}$,
$v_{\psi}$, $w_{\chi}$, $w_{\psi}$, $x_{\chi}$ and $x_{\psi}$. As $\{w_{\chi
},v_{\phi},x_{\phi}\}$ is a triple of $P$ there must be two triples of $P$
each of which contains one of $v_{\psi}$, $w_{\psi}$ and one of $x_{\chi}$,
$x_{\psi}$. Consequently $P$ has two triples of the form
\[
\{\text{one of }x_{\chi},x_{\psi}\}\cup\{\text{one of }y_{\phi},y_{\psi}%
\}\cup\{\text{one of }v_{\psi},w_{\psi}\}\text{.}%
\]
The columns of $IAS(G)$ corresponding to $v_{\psi}$ and $w_{\psi}$\ both have
nonzero $x$ entries, so $x_{\psi}$ and $y_{\psi}$ cannot appear in the same
triple. Consequently these two triples are
\[
\{x_{\chi},y_{\psi}\}\cup\{\text{one of }v_{\psi},w_{\psi}\}\text{ and
}\{x_{\psi},y_{\phi}\}\cup\{\text{one of }v_{\psi},w_{\psi}\}\text{.}%
\]
It follows that $N(x)=\{v,w,y\}$ and the subgraph of $G$ induced by
$\{v,w,x,y\}$ is an entire connected component of $G$. See Figure
\ref{lcmatfi2}.

Notice that $N(v)=\{w,x\}$ and $N(w)=\{v,x\}$, so $v$ and $w$ are adjacent
twins, and $v_{\psi}$ and $w_{\psi}$ are parallel in $M[IAS(G)]$.
Interchanging $v_{\psi}$ and $w_{\psi}$ if necessary, we may presume that
$\{x_{\chi},y_{\psi},v_{\psi}\}$ and $\{x_{\psi},y_{\phi},w_{\psi}\}$ are both
triples of $P$. Theorems \ref{thmloop} and \ref{lcinv} tell us that there is a
compatible isomorphism $\beta:M[IAS(G)]\rightarrow M[IAS(G_{s}^{w})]$ whose
associated map $f_{\beta}:V(G)\rightarrow S_{3}$ has $f_{\beta}(w)=(\phi\psi
)$, $f_{\beta}(v)=f_{\beta}(x)=(\chi\psi)$ and $f_{\beta}(z)=1$ for
$z\notin\{v,w,x\}$. Consequently $\beta(P)$ contains $\beta(\{w_{\chi}%
,v_{\phi},x_{\phi}\})=\{w_{\chi},v_{\phi},x_{\phi}\}$, $\beta(\{w_{\phi
},y_{\chi},v_{\chi}\})=\{w_{\psi},y_{\chi},v_{\psi}\}$, $\beta(\{x_{\chi
},y_{\psi},v_{\psi}\})=\{x_{\psi},y_{\psi},v_{\chi}\}$ and $\beta(\{x_{\psi
},y_{\phi},w_{\psi}\})=\{x_{\chi},y_{\phi},w_{\phi}\}$. It follows that
$v,w,x,y$ is a bent 4-path in $\beta(P)$.
\end{proof}

%

%TCIMACRO{\FRAME{ftbpFU}{2.3869in}{0.6469in}{0pt}{\Qcb{ The situation
%considered at the end of the proof of Lemma \ref{ltwo}.}}{\Qlb{lcmatfi2}%
%}{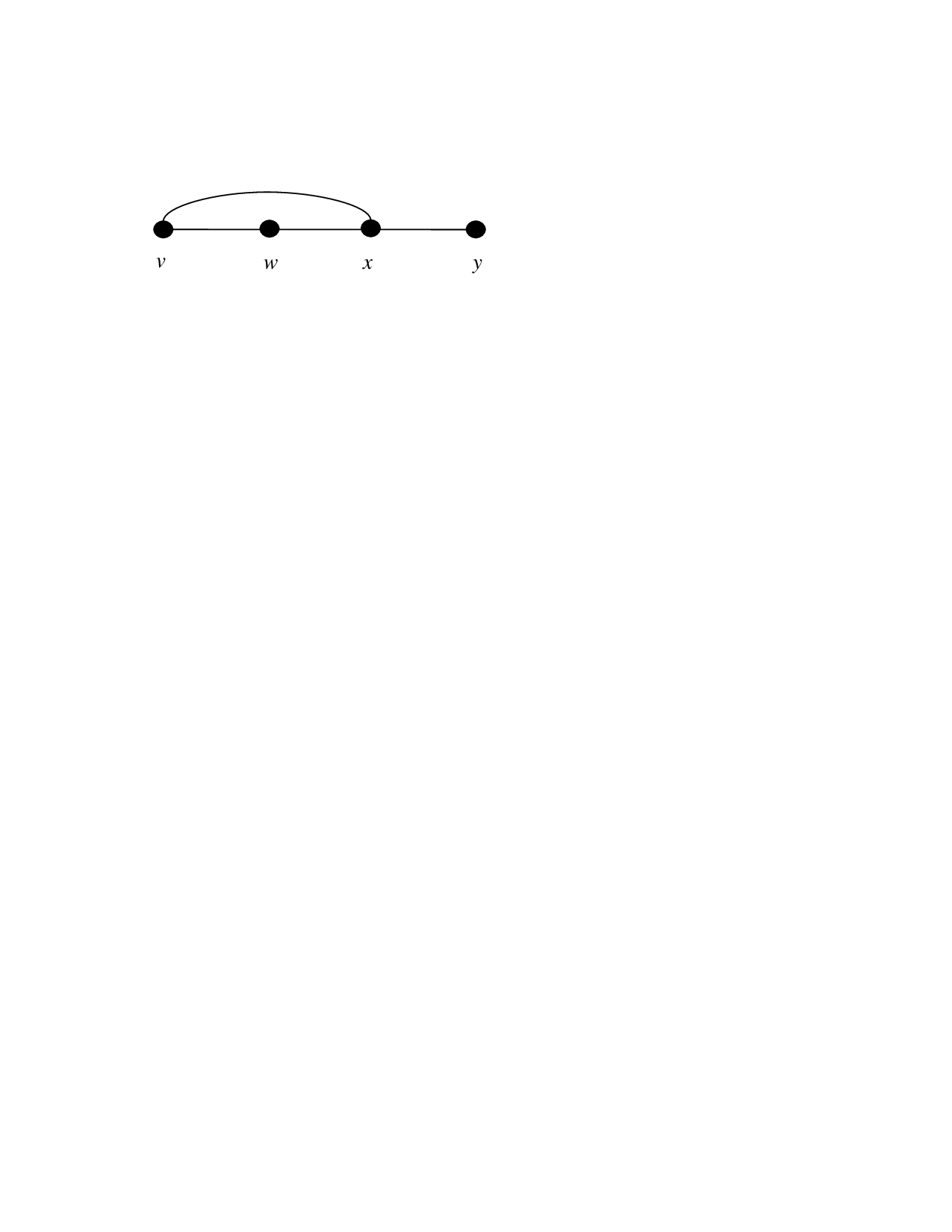}{\special{ language "Scientific Word";  type "GRAPHIC";
%maintain-aspect-ratio TRUE;  display "USEDEF";  valid_file "F";
%width 2.3869in;  height 0.6469in;  depth 0pt;  original-width 8.4968in;
%original-height 11.0056in;  cropleft "0.1567";  croptop "0.8537";
%cropright "0.5269";  cropbottom "0.7784";
%filename 'lcmatfi2.ps';file-properties "XNPEU";}} }%
%BeginExpansion
\begin{figure}
[ptb]
\begin{center}
\includegraphics[
trim=1.331449in 8.566759in 4.019836in 1.610119in,
height=0.6469in,
width=2.3869in
]%
{}%
\caption{ The situation considered at the end of the proof of Lemma
\ref{ltwo}.}%
\label{lcmatfi2}%
\end{center}
\end{figure}
%EndExpansion

\begin{definition}
If $G$ is a looped simple graph and $P$ is a triangulation of $W(G)$ then the
\emph{index} of $P$ is $\left\Vert P\right\Vert =\left\vert \{\text{non-vertex
triples of }P\}\right\vert $.
\end{definition}

\begin{proposition}
\label{constr}Let $P$ be a non-vertex triangulation of $W(G)$. Then there are
an integer $k\in\{1,...,\left\Vert P\right\Vert \}$, a sequence $G=H_{0}%
,...,H_{k}$ of graphs and a sequence $P=P_{0},...,P_{k}$ of triangulations
such that:

\begin{enumerate}
\item If $1\leq i\leq k$ then $H_{i}$ is obtained from $H_{i-1}$ through some
(possibly empty) sequence of local complementations and loop complementations.

\item If $1\leq i<k$ then $P_{i}$ is a non-vertex triangulation of $W(H_{i}).$

\item $P_{k}$ is the vertex triangulation of $W(H_{k})$.

\item If $1\leq i\leq k$ then $\left\Vert P_{i}\right\Vert \in\{\left\Vert
P_{i-1}\right\Vert ,\left\Vert P_{i-1}\right\Vert -1,\left\Vert P_{i-1}%
\right\Vert -2,\left\Vert P_{i-1}\right\Vert -4\}$.

\item If $\left\Vert P_{i}\right\Vert \in\{\left\Vert P_{i-1}\right\Vert
,\left\Vert P_{i-1}\right\Vert -1,\left\Vert P_{i-1}\right\Vert -2\}$\ then
$P_{i}$ is obtained from $P_{i-1}$ by interchanging two parallel elements of
$M[IAS(H_{i-1})]$.

\item If $\left\Vert P_{i}\right\Vert =\left\Vert P_{i-1}\right\Vert $, then
$i<k$ and $\left\Vert P_{i+1}\right\Vert =\left\Vert P_{i}\right\Vert -4$.

\item If $\left\Vert P_{i}\right\Vert =\left\Vert P_{i-1}\right\Vert
-4$,\ then $P_{i}$ is obtained from $P_{i-1}$ by replacing the triples
corresponding to a bent 4-set with the four corresponding vertex triples.
\end{enumerate}
\end{proposition}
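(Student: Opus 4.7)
The plan is to construct the sequences $(H_i)$ and $(P_i)$ inductively, stopping as soon as $P_i$ is canonical, and to exhibit each transition $(H_{i-1},P_{i-1})\mapsto(H_i,P_i)$ as one of the moves permitted by conditions 5--7. The engine is Lemma \ref{ltwo}; a short separate argument handles the only situation whose hypothesis that lemma does not cover.

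For the induction step, given $P_{i-1}$ non-canonical on $W(H_{i-1})$, first suppose some non-canonical cell $C$ of $P_{i-1}$ contains two elements $v_{\iota}, v_{\kappa}$ corresponding to the same vertex $v$ of $H_{i-1}$. Since $v_{\phi}+v_{\chi}+v_{\psi}=0$, the unique 3-circuit containing two $v$-elements is the canonical cell $\{v_{\phi},v_{\chi},v_{\psi}\}$; so $C$ must instead be of ``loop and two non-loop parallels'' form. An inspection of which columns of $IAS(H_{i-1})$ can be zero (only the $\chi$-column of an isolated unlooped vertex or the $\psi$-column of a vertex whose only neighbor is itself) then forces the existence of a second vertex $u$ with a zero column whose loop and parallels have been interchanged with $v$'s between their canonical cells. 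A single parallel swap of the two zero columns simultaneously repairs both cells, so we take $H_i=H_{i-1}$ and let $P_i$ be the result, reducing $\|P\|$ by $2$ and satisfying conditions 4--5.

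Otherwise the hypothesis of Lemma \ref{ltwo} holds. Apply Lemmas \ref{lone} and \ref{ltwo} to obtain a graph $H_i$ from $H_{i-1}$ by a (possibly empty) sequence of local and loop complementations, together with the induced compatible isomorphism $\beta:M(IAS(H_{i-1}))\to M(IAS(H_i))$; because compatible isomorphisms carry canonical cells to canonical cells, $\|\beta(P_{i-1})\|=\|P_{i-1}\|$. Lemma \ref{ltwo} then exhibits a bent 4-set either in $\beta(P_{i-1})$ itself or in the triangulation obtained from it by one parallel-element interchange. In the first alternative we let $P_i$ be $\beta(P_{i-1})$ with the 4-set unbent, so $\|P_i\|=\|P_{i-1}\|-4$ and condition 7 is met. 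In the second alternative we let $P_i$ be the parallel-swapped triangulation; since the swap touches at most two cells, $\|P_i\|-\|P_{i-1}\|\in\{-2,-1,0\}$, matching condition 5. In the sub-subcase $\|P_i\|=\|P_{i-1}\|$, the bent 4-set is now present in $P_i$, and condition 6 is satisfied by appending one further round with $H_{i+1}=H_i$ and $P_{i+1}$ obtained by unbending.

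Termination and the bound $k\leq\|P\|$ are then automatic: $\|P_i\|$ is a nonnegative integer, and every ``$0$-round'' is immediately paired with a ``$-4$''-round, so the average decrease per round is at least $1$. The main obstacle I anticipate is the first (degenerate) case: verifying that the needed parallel swap always exists and yields a valid triangulation requires a careful enumeration of zero columns of $IAS(H_{i-1})$ and of how their associated loops can be distributed among the cells of $P_{i-1}$. All the substantive matroid reasoning, however, is already contained in Lemmas \ref{lone} and \ref{ltwo}, so the remainder is essentially a bookkeeping assembly of those results into the format prescribed by conditions 4--7.
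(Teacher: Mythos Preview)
Your handling of the case where a non-canonical cell $C$ contains two elements $v_{\iota},v_{\kappa}$ of the same vertex $v$ is where the argument breaks. The claim that ``the unique 3-circuit containing two $v$-elements is the canonical cell'' is false: any element $x$ parallel to the missing $v_{\lambda}$ yields a 3-circuit $\{v_{\iota},v_{\kappa},x\}$ (for instance, if $w$ is an unlooped pendant on $v$ then $w_{\chi}$ is parallel to $v_{\phi}$, and $\{v_{\chi},v_{\psi},w_{\chi}\}$ is a genuine 3-circuit). So $C$ need not be of ``loop and two parallels'' form, and your subsequent analysis of zero columns and a paired vertex $u$ is built on a wrong premise. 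Relatedly, the index drop in this case is not always $2$; it can be $1$.

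The paper's treatment of this case is much simpler and does not split according to the type of $C$. Whether $C$ is a 3-circuit or a loop-plus-parallels, the three corresponding columns of $IAS(H_{i-1})$ sum to $0$; hence the third element $x$ of $C$ has the same column as $v_{\lambda}$, i.e.\ $x$ and $v_{\lambda}$ are parallel (or both loops). Interchanging $x$ and $v_{\lambda}$ turns $C$ into the canonical cell $\{v_{\phi},v_{\chi},v_{\psi}\}$ and leaves every other cell a valid triangulation cell, so $\|P\|$ drops by $1$ or $2$ depending on whether the cell that previously held $v_{\lambda}$ happens to become canonical. With this correction in place, the rest of your plan---invoking Lemma~\ref{ltwo} in the remaining case, transporting via the compatible isomorphism, and either unbending a 4-set immediately (drop $4$) or first performing the single parallel swap (drop $0$, $1$, or $2$) and then unbending---matches the paper's argument.
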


\begin{proof}
Suppose $v$ is a vertex of $G$ such that $\{v_{\phi},v_{\chi},v_{\psi}\}\notin
P$ and two of $v_{\phi},v_{\chi},v_{\psi}$ appear together in a single triple
of $P$. Then the third element of this triple is parallel to the third of
$v_{\phi},v_{\chi},v_{\psi}$. Interchanging these two parallels transforms
this triple into the vertex triple corresponding to $v$, and may also
transform another non-vertex triple of $P$ into a vertex triple, so the
resulting triangulation $P^{\prime}$ has $\left\Vert P^{\prime}\right\Vert
\in\{\left\Vert P\right\Vert -1,\left\Vert P\right\Vert -2\}$. If there is no
such vertex $v$, then Lemma \ref{ltwo} applies.
\end{proof}

Propositions \ref{triang} and \ref{constr} tell us that if $P$ is a non-vertex
triangulation of $M[IAS(G)]$, then there is an automorphism $\alpha_{P}$ of
$M[IAS(G)]~$such that $\alpha_{P}(P)$ is the vertex triangulation. Theorem
\ref{noncom} follows, for if $\gamma:M[IAS(G_{1})]\rightarrow M[IAS(G_{2})]$
is a non-compatible isomorphism and $P$ is the image of the vertex
triangulation of $W(G_{1})$ under $\gamma$, then $\alpha_{P}\circ
\gamma:M[IAS(G_{1})]\rightarrow M[IAS(G_{2})]$ is a compatible isomorphism.

\section{Delta-matroids and isotropic systems}

The results of this paper show that the theory of binary matroids contains
\textquotedblleft conceptual imbeddings\textquotedblright\ of the theories of
graphic delta-matroids and isotropic systems, two interesting and useful
theories studied by Bouchet in the 1980s and 1990s. Bouchet later introduced a
third theory, involving \emph{multimatroids}, to unify these two. Using
terminology of \cite{B1}, we can summarize our \textquotedblleft conceptual
imbeddings\textquotedblright\ by saying two things. First, if $G$ is a looped
simple graph then $M[IAS(G)]$ is a binary matroid that shelters the 3-matroid
associated with an isotropic system with fundamental graph $G$, and the
submatroid $M[IA(G)]$ shelters the 2-matroid associated with a delta-matroid
with fundamental graph $G$. (\textquotedblleft Sheltering\textquotedblright%
\ is a way of containing; that's why we use the term \textquotedblleft
imbedding.\textquotedblright) Second, matroidal properties of $M[IAS(G)]$
provide new explanations of the properties of graphic delta-matroids and
isotropic systems; that's what makes the imbeddings \textquotedblleft
conceptual.\textquotedblright\ For instance, compatible isomorphisms of
isotropic matroids provide a new explanation of the significance of isotropic
systems, using the fact that certain kinds of basis exchanges correspond to
local complementations. Compatible isomorphisms also provide a new way to
conceptualize the work of Brijder and Hoogeboom \cite{BH3, BH} on the
connection between $S_{3}$ and certain operations on delta-matroids.

\begin{definition}
\cite{bouchet1987} If $G$ is a looped simple graph, then the delta-matroid
associated to $G$ is
\[
D(G)=\{S\subseteq V(G)\mid\text{the submatrix }A(G)[S]\text{ is nonsingular
over }GF(2)\}.
\]

\end{definition}

Here $A(G)[S]$ denotes the principal submatrix of $A(G)$ obtained by removing
all rows and columns corresponding to vertices $v\notin S$. Observe that
\[
D(G)=\{S\subseteq V(G)\mid\{s_{\chi}\mid s\in S\}\cup\{v_{\phi}\mid v\notin
S\}\text{ is a basis of }M[IAS(G)]\}\text{,}%
\]
so the matroid $M[IAS(G)]$ determines $D(G)$. (The index $\psi$ does not
appear in this description of $D(G)$, so the submatroid $M[IA(G)]$ actually
contains enough information to determine $D(G)$.) Moreover, if $G_{1}$ and
$G_{2}$ are looped simple graphs and there is a compatible isomorphism
$\beta:M[IAS(G_{1})]\rightarrow M[IAS(G_{2})]$ with $f_{\beta}(v)(\psi)=\psi$
$\forall v\in V(G_{1})$, then the set $X=\{v\in V(G_{1})\mid f_{\beta}%
(v)\neq1\}$ has the property that%
\[
D(G_{2})=\{S\Delta X\mid S\in D(G_{1})\}\text{.}%
\]
Consequently, the significance of symmetric difference (also called
\textquotedblleft twisting\textquotedblright) for the theory of graphic
delta-matroids follows from the results of Section 4, regarding compatible
isomorphisms of $M[IAS(G)]$ and $M[IA(G)]$.

It takes only a little more work to see how $M[IAS(G)]$ determines an
isotropic system.

\begin{definition}
If $G$ is a looped simple graph then the \emph{sub-transversals} of $W(G)$ are
the elements of $\mathcal{S}(W(G))=\{S\subseteq W(G)\mid\left\vert
S\cap\{v_{\phi},v_{\chi},v_{\psi}\}\right\vert \leq1$ $\forall v\in V(G)\}.$
\end{definition}

Recall that the power set $\mathcal{P}(W(G))$ is a vector space over $GF(2)$,
with symmetric difference used for addition. Let $Q$ be the subspace of
$\mathcal{P}(W(G))$ spanned by vertex triples. Then each element of the
quotient space $\mathcal{P}(W(G))/Q$ includes one element of $\mathcal{S}%
(W(G))$, so we may identify $\mathcal{S}(W(G))$ with $\mathcal{P}(W(G))/Q$.
Denote the resulting addition in $\mathcal{S}(W(G))$ by $\boxplus$.

Recall also that the \emph{cycle space} $Z(M[IAS(G)])$ is the $GF(2)$-subspace
of $\mathcal{P}(W(G))$ consisting of the subsets of $W(G)$ that correspond to
sets of columns of $IAS(G)$ whose sum is $0$.

\begin{definition}
\label{subt}A \emph{transverse cycle} of $G$ is an element of%
\[
\mathcal{L}(G)=\mathcal{S}(W(G))\cap Z(M[IAS(G)]).
\]

\end{definition}

If $X\subseteq V(G)$ and $S\in\mathcal{S}(W(G))$ then $X\cdot S$ denotes
$S\cap\{v_{\phi},v_{\chi},v_{\psi}\mid v\in X\}$.

\begin{proposition}
\label{cychar}Let $\Phi(G)=\{v_{\phi}\mid v\in V(G)\}$, and let $\Psi
(G)=\{v_{\psi}\mid v\in V(G)$ is looped $\}\cup\{v_{\chi}\mid v\in V(G)$ is
unlooped $\}$. Then
\[
\mathcal{L}(G)=\{(X\cdot\Psi(G))\boxplus(N(X)\cdot\Phi(G))\mid X\subseteq
V(G)\}\text{.}%
\]

\end{proposition}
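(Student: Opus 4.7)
The plan is to prove the two inclusions separately, taking $N(X)$ to mean $\{w \in V(G) : |N(w) \cap X| \text{ is odd}\}$ (equivalently, the $GF(2)$-linear extension of $v \mapsto N(v)$) and using Lemma \ref{cycchar} as the main combinatorial tool. The key preliminary observation is that the $\Psi(G)$-element at $v$ --- namely $v_\psi$ if $v$ is looped, $v_\chi$ if not --- has column vector $\sum_{u \in N(v)} e_u$ regardless of loop status, because the $\psi$/$\chi$ choice is precisely the one that cancels the diagonal entry of the loop.

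For the $\supseteq$ inclusion, fix $X \subseteq V(G)$ and let $T = \sigma(X \cdot \Psi(G)) \boxplus \sigma(N(X) \cdot \Phi(G))$. Membership in $\mathcal{S}(W(G))$ is immediate, as $\sigma(X \cdot \Psi(G))$ contains at most one element per canonical cell while $\sigma(N(X) \cdot \Phi(G))$ contributes only $\phi$-elements. To see that $T$ lies in $Z(M(IAS(G)))$, I would first observe that the column-sum map factors through $\sigma$, since its kernel contains each $\{v_\phi, v_\chi, v_\psi\}$ (because $v_\phi + v_\chi + v_\psi = 0$), and then compute the column sum of $T$ as
\[
\sum_{v \in X} \sum_{u \in N(v)} e_u \;+\; \sum_{w \in N(X)} e_w \;=\; \sum_{u \in V(G)} \bigl(|N(u) \cap X| + [u \in N(X)]\bigr)\,e_u,
\]
which vanishes by the definition of $N(X)$.

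For $\subseteq$, given $S \in \mathcal{L}(G)$ I recover a candidate $X$ by setting $X = \{v \in V(G) : v_\chi \in S \text{ or } v_\psi \in S\}$, and then argue that $S$ equals the $T$ produced by this $X$. The verification is cell-by-cell: for each $v \in V(G)$ there are eight sub-cases (the four possibilities for $S \cap \{v_\phi, v_\chi, v_\psi\}$ crossed with the two loop statuses), and in each one Lemma \ref{cycchar} tells us the parity of $|N(v) \cap (S - S_\phi)| = |N(v) \cap X|$, which is exactly the indicator of whether $v \in N(X)$. Pairing this with the $\boxplus$-rules from Proposition \ref{plusbox} and the loop-dependent definition of $\Psi(G)$ pins down $T \cap \{v_\phi, v_\chi, v_\psi\}$, and in each case it agrees with $S \cap \{v_\phi, v_\chi, v_\psi\}$. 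The main obstacle will be purely organizational: one must keep track of how the swap between $v_\chi$ and $v_\psi$ built into $\Psi(G)$ lines up with the swap between the even-parity conditions (1 and 3) and the odd-parity condition (2) of Lemma \ref{cycchar} as the loop status changes. Once this tabulation is written down, the verification is mechanical.
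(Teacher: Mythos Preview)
Your proposal is correct and follows essentially the same approach as the paper: the paper's one-line proof simply says ``This follows immediately from Lemma \ref{cycchar}, with $X=S-S_{\phi}$,'' which is exactly your choice of $X=\{v:v_\chi\in S\text{ or }v_\psi\in S\}$. Your write-up is a detailed unpacking of that same idea; the direct column-sum computation you use for the $\supseteq$ direction is just Lemma \ref{cycchar} rephrased.
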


\begin{proof}
Let $S\in\mathcal{S}(W(G))$. Then $S\in\mathcal{L}(G)$ if and only if for
every $v\in V(G)$, the sum of the $v$ entries of the columns of $IAS(G)$
corresponding to elements of $S$ is $0$. That is, if we let $S_{\phi}=\{v\in
V(G)\mid v_{\phi}\in S\}$, $S_{\chi}^{\ell}=\{$looped $v\in V(G)\mid v_{\chi
}\in S\}$, $S_{\psi}^{\ell}=\{$looped $v\in V(G)\mid v_{\psi}\in S\}$,
$S_{\chi}=\{$unlooped $v\in V(G)\mid v_{\chi}\in S\}$, and $S_{\psi}%
=\{$unlooped $v\in V(G)\mid v_{\psi}\in S\}$ then $S\in\mathcal{L}(G)$ if and
only if the following conditions are met:

\begin{enumerate}
\item For every $v\in S_{\chi}\cup S_{\psi}^{\ell}$, $\left\vert
N(v)\cap(S-S_{\phi})\right\vert $ is even.

\item For every $v\in S_{\phi}\cup S_{\chi}^{\ell}\cup S_{\psi}$, $\left\vert
N(v)\cap(S-S_{\phi})\right\vert $ is odd.

\item For every $v\in V(G)$ with $\{v_{\phi},v_{\chi},v_{\psi}\}\cap
S=\varnothing$, $\left\vert N(v)\cap(S-S_{\phi})\right\vert $ is even.
\end{enumerate}

It follows that $S\in\mathcal{L}(G)$ if and only if $S=(X\cdot\Psi
(G))\boxplus(N(X)\cdot\Phi(G))$, with $X=S-S_{\phi}$.
\end{proof}

As $\Phi(G)$ and $\Psi(G)$ are disjoint elements of $\mathcal{S}(W(G))$, and
each is of size $\left\vert V(G)\right\vert $, they satisfy Bouchet's
definition of \emph{supplementary vectors} \cite{Bi2}. It follows from
Proposition \ref{cychar} that $\mathcal{L}(G)$ is an isotropic system with
fundamental graph $G$. The basic theorem of isotropic systems -- that two
simple graphs are locally equivalent if and only if they are fundamental
graphs of strongly isomorphic isotropic systems -- now follows immediately
from Theorem \ref{lcintro4}.

It is worth taking a\ moment to observe that even though $\mathcal{L}(G)$
includes only the transverse cycles of $G$, it contains enough information to
determine $G$, and hence also $M[IAS(G)]$. The reason is simple: For each
$v\in V(G)$, $\mathcal{L}(G)$ contains precisely one transverse cycle
$\zeta_{v}\subset\{v_{\chi},v_{\psi}\}\cup\{w_{\phi}\mid v\neq w\in V(G)\}$.
The open neighborhood of $v$ is $N(v)=\{w\mid w_{\phi}\in\zeta_{v}\}$, and $v$
is looped if and only if $v_{\psi}\in\zeta_{v}$.

Before proceeding, we take another moment to expand on the following comment
of Bouchet \cite{B1}:

\begin{quotation}
The theory of isotropic systems can be considered as an extension of the
theory of binary matroids, whereas delta-matroids extend arbitrary matroids.
However delta-matroids do not generalize isotropic systems.
\end{quotation}

Jaeger showed that every binary matroid can be represented by some symmetric
$GF(2)$-matrix, or equivalently, by the adjacency matrix of some looped simple
graph \cite{J1}. (This result is also discussed in \cite{BHT}.) It follows
that every binary matroid can be extended to some isotropic matroid. As the
theory of isotropic systems is equivalent to the theory of isotropic matroids,
this confirms the first part of Bouchet's comment. On the other hand, all
isotropic matroids are binary so the theory of isotropic systems can also be
considered to be a \emph{subset} of the theory of binary matroids, rather than
an extension.

The second sentence of Bouchet's comment seems questionable. If $G$ is a
looped simple graph then $G$ is completely determined by $D(G)$: a vertex $v$
is looped if and only $\{v\}\in D(G)$, two looped vertices $v$ and $w$ are
adjacent if and only if $\{v,w\}\notin D(G)$, and otherwise two vertices $v$
and $w$ are adjacent if and only if $\{v,w\}\in D(G)$. Consequently, $D(G)$
also determines the isotropic systems with fundamental graph $G$, up to strong
isomorphism. All isotropic systems have fundamental graphs, and there are
non-graphic delta-matroids, so it would certainly seem that in some sense,
delta-matroids \emph{do} generalize isotropic systems. The reader interested
in a detailed discussion of this point will appreciate\ a recent paper of
Brijder and Hoogeboom \cite{BH}.

\section{Some properties of isotropic matroids}

In this section we quickly survey several basic properties of isotropic
matroids. One basic property was noted above: every binary matroid is a
submatroid of some isotropic matroid. A simpler basic property is that the
isotropic matroid of a one-vertex graph is not connected: it consists of a
loop and a pair of parallel non-loops. For larger graphs, though, we have the following.

\begin{proposition}
\label{connected}Let $G$ be a looped simple graph with two or more vertices.
Then $M[IAS(G)]$ is a connected matroid if and only if $G$ is a connected graph.
\end{proposition}

\begin{proof}
Suppose first that $G$ is connected. If $v\in V(G)$ then the columns of
$IAS(G)$ corresponding to $v_{\phi}$, $v_{\chi}$, $v_{\psi}$ are nonzero, and
sum to $0$; hence $\{v_{\phi}$, $v_{\chi}$, $v_{\psi}\}$ is a circuit of
$M[IAS(G)]$. Let $\Phi$ denote the basis $\{w_{\phi}\mid w\in V(G)\}$ of
$M[IAS(G)]$. If $v$ neighbors $w$ in $G$ then $w_{\phi}$ and $v_{\chi}$ are
both elements of the fundamental circuit $C(v_{\chi},\Phi)$, so $\{v_{\phi}$,
$v_{\chi}$, $v_{\psi}\}$ and $\{w_{\phi}$, $w_{\chi}$, $w_{\psi}\}$ are
contained in the same component of $M[IAS(G)]$. As this holds for all
neighbors and $G$ is connected, we conclude that all elements of $M[IAS(G)]$
lie in the same component.

Now suppose $G$ is not connected, and let $K$ be a connected component of $G$.
If $C$ is a set of columns of $IAS(G)$ whose sum is $0$, then the subset
$C_{K}=\{x\in C\mid x$ corresponds to a vertex of $K\}$ must sum to $0$, as no
element of $C_{K}$ has a nonzero entry in any row corresponding to a vertex
outside of $K$. It follows that every circuit of $M[IAS(G)]$ that intersects
$M[IAS(K)]$ is contained in $M[IAS(K)]$, so $M[IAS(G)]$ is not connected: it
is the direct sum of $M[IAS(K)]$ and $M[IAS(G-K)]$.
\end{proof}

Note that the argument of the second paragraph of the proof of Proposition
\ref{connected} implies that if $G$ is a looped simple graph with connected
components $K_{1},...,K_{c}$ then
\[
M[IAS(G)]=%
%TCIMACRO{\dbigoplus \limits_{i=1}^{c}}%
%BeginExpansion
{\displaystyle\bigoplus\limits_{i=1}^{c}}
%EndExpansion
M[IAS(K_{i})]\text{.}%
\]

\subsection{Minors}

Given the discussion of Section 6, it is no surprise that some properties of
isotropic matroids are suggested by properties of delta-matroids and isotropic
systems. For instance, local complementation and vertex deletion are connected
to matroid minor operations in much the same way as they are connected to the
minor operations of isotropic systems \cite[Section 8]{Bi1}. Establishing
these connections is somewhat easier here, though, because the arguments
require only elementary linear algebra.

\begin{proposition}
\label{minor}If $A\subseteq V(G)$ then%
\[
M[IAS(G-A)]=(M[IAS(G)]/\{a_{\phi}\mid a\in A\})-\{a_{\chi},a_{\psi}\mid a\in
A\}\text{.}%
\]

\end{proposition}

\begin{proof}
If $A$ has only one element, $a$, then the lone nonzero entry of the $a_{\phi
}$ column of $M[IAS(G)]$ is a 1 in the $a$ row. The definition of matroid
contraction implies that $M[IAS(G)]/a_{\phi}$ is the matroid represented by
the submatrix of $IAS(G)$ obtained by removing the $a$ row and the $a_{\phi}$
column. Removing the $a_{\chi}$ and $a_{\psi}$ columns then yields $IAS(G-a)$,
so the proposition holds when $A=\{a\}$. The general case is verified by
removing the elements of $A$ one at a time.
\end{proof}

\begin{corollary}
$G$ can be reconstructed from $M[IAS(G)]$.
\end{corollary}

\begin{proof}
A vertex $v$ is looped in $G$ if and only if $v_{\psi}$ is a loop in
$M[IAS(G-(V(G)-\{v\}))]$, and two vertices $v$ and $w$ are adjacent in $G$ if
and only if $M[IAS(G-(V(G)-\{v,w\}))]$ is a connected matroid.
\end{proof}

\begin{corollary}
$M[IAS(G)]$ is a regular matroid if and only if $G$ has no connected component
with more than two vertices.
\end{corollary}

\begin{proof}
If every connected component of $G$ has one or two vertices, then $M[IAS(G)]$
is a direct sum of submatroids of size three or six. The smallest binary
matroids that are not regular have seven elements, so $M[IAS(G)]$ is a direct
sum of regular matroids.

On the other hand, if $G$ has a connected component with three or more
vertices then there is a subset $A\subset V(G)$ such that $G-A$ is isomorphic
to a looped version of either the complete graph $K_{3}$ or the path $P_{3}$.
Then $IAS(G-A)$ is a $3\times9$ matrix with a submatrix whose columns can be
permuted to yield%
\[%
\begin{pmatrix}
1 & 0 & 0 & 1 & 1 & 0 & 1\\
0 & 1 & 0 & 1 & 0 & 1 & 1\\
0 & 0 & 1 & 0 & 1 & 1 & 1
\end{pmatrix}
.
\]
Consequently, the Fano matroid is\ a submatroid of $M[IAS(G-A)]$. As
$M[IAS(G-A)]$ is a minor of $M[IAS(G)]$, it follows that $M[IAS(G)]$ is not regular.
\end{proof}

The next two results are obtained by combining Proposition \ref{minor} with
Theorem \ref{lcinv} and Corollary \ref{pivinv}.

\begin{corollary}
\label{lcminor}Let $a$ be a vertex of a looped simple graph $G$. Then%
\[
M[IAS(G_{ns}^{a}-a)]=\left\{
\begin{array}
[c]{cc}%
(M[IAS(G)]/a_{\psi})-a_{\phi}-a_{\chi}\text{,} & \text{if }a\text{ is\ not
looped in }G\\
& \\
(M[IAS(G)]/a_{\chi})-a_{\phi}-a_{\psi}\text{,} & \text{if }a\text{ is looped
in }G
\end{array}
\right.  .
\]

\end{corollary}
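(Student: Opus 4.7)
The plan is to apply Theorem \ref{minor} to the graph $G_{ns}^{v}$ itself and then translate the resulting minor across a compatible isomorphism provided by Theorem \ref{lcinv}. First, note that $v$ remains a vertex of $G_{ns}^{v}$, and since the non-simple local complementation only toggles loops on elements of $N(v)$ (and $v\notin N(v)$ in a looped simple graph), the loop status of $v$ is the same in $G$ and in $G_{ns}^{v}$. Thus Theorem \ref{minor}, applied to $G_{ns}^{v}$, yields
\[
M(IAS(G_{ns}^{v}-v))=\bigl(M(IAS(G_{ns}^{v}))/v_{\phi}\bigr)-v_{\chi}-v_{\psi},
\]
where the three elements removed or contracted live in $W(G_{ns}^{v})$.

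Next, Theorem \ref{lcinv} supplies a compatible isomorphism $\beta:M(IAS(G))\to M(IAS(G_{ns}^{v}))$ whose associated function $f_{\beta}$ is the identity off $v$ and satisfies $f_{\beta}(v)=(\phi\chi)$ when $v$ is looped in $G$ and $f_{\beta}(v)=(\phi\psi)$ when $v$ is unlooped. Since contraction and deletion commute with matroid isomorphism, I can pull the minor of $M(IAS(G_{ns}^{v}))$ displayed above back along $\beta^{-1}$ to obtain an equal minor of $M(IAS(G))$, provided I correctly identify the $\beta^{-1}$-images of $v_{\phi},v_{\chi},v_{\psi}\in W(G_{ns}^{v})$.

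The key bookkeeping step is then immediate from the definition of $f_{\beta}(v)$: in the looped case, $\beta^{-1}$ sends $\{v_{\phi},v_{\chi},v_{\psi}\}\subset W(G_{ns}^{v})$ to $\{v_{\chi},v_{\phi},v_{\psi}\}\subset W(G)$, so the element being contracted is $v_{\chi}$ and the deleted elements are $v_{\phi}$ and $v_{\psi}$; in the unlooped case, $\beta^{-1}$ sends that triple to $\{v_{\psi},v_{\chi},v_{\phi}\}$, so $v_{\psi}$ is contracted and $v_{\phi},v_{\chi}$ are deleted. Rewriting the minor of $M(IAS(G_{ns}^{v}))$ through $\beta^{-1}$ produces exactly the two cases in the statement.

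The only real obstacle is verifying the identification of $\beta^{-1}(v_{\phi})$ in each of the two cases; everything else is a purely formal consequence of Theorems \ref{minor} and \ref{lcinv}. (Corollary \ref{pivinv} is not actually needed for this particular corollary, although the loop-complementation and edge-pivot variants listed alongside it in Sections 4.1 and 4.3 give analogous statements by the same template.)
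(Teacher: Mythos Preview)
Your argument is correct and follows exactly the route the paper indicates: apply Theorem \ref{minor} to $G_{ns}^{v}$, then transport the minor across the compatible isomorphism of Theorem \ref{lcinv}. One small point worth making explicit is why the result is an \emph{equality} rather than merely an isomorphism: since $f_{\beta}(w)=1$ for all $w\neq v$, the isomorphism $\beta$ restricts to the identity map on the common ground set $W(G)\setminus\{v_{\phi},v_{\chi},v_{\psi}\}=W(G_{ns}^{v}-v)$, so the pulled-back minor is literally the same matroid, not just an isomorphic copy (this is exactly the remark the paper makes after Corollary \ref{pivminor}).
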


\begin{corollary}
\label{pivminor}Let $a$ be a vertex of a looped simple graph $G$, and let $b$
be a neighbor of $a$. Then%
\[
M[IAS(G^{ab}-a)]\cong\left\{
\begin{array}
[c]{cc}%
(M[IAS(G)]/a_{\chi})-a_{\phi}-a_{\psi}\text{,} & \text{if }a\text{ is\ not
looped in }G\\
& \\
(M[IAS(G)]/a_{\psi})-a_{\phi}-a_{\chi}\text{,} & \text{if }a\text{ is looped
in }G
\end{array}
\right.  .
\]

\end{corollary}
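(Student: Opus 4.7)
The strategy is to combine Theorem~\ref{minor}, applied to the pivoted graph $G^{vw}$, with the compatible isomorphism supplied by Corollary~\ref{pivinv}. Theorem~\ref{minor}, applied to the vertex $v$ of $G^{vw}$, gives the equality
\[
M(IAS(G^{vw}-v))=(M(IAS(G^{vw}))/v_{\phi})-v_{\chi}-v_{\psi},
\]
so it is enough to exhibit an isomorphism between the right-hand side and the minor of $M(IAS(G))$ named in the corollary. I would do this by pulling back along the compatible isomorphism $\beta:M(IAS(G))\rightarrow M(IAS(G^{vw}))$ provided by Corollary~\ref{pivinv}, identifying $V(G)=V(G^{vw})$ up to isomorphism in the usual way.

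Because $\beta$ is a matroid isomorphism, it commutes with contractions and deletions, so
\[
(M(IAS(G^{vw}))/v_{\phi})-v_{\chi}-v_{\psi}\;\cong\;(M(IAS(G))/\beta^{-1}(v_{\phi}))-\beta^{-1}(v_{\chi})-\beta^{-1}(v_{\psi}).
\]
The function $f_{\beta}$ of Corollary~\ref{pivinv} satisfies $f_{\beta}(v)=(\phi\chi)$ when $v$ is unlooped in $G$ and $f_{\beta}(v)=(\phi\psi)$ when $v$ is looped, and from $\beta(v_{\iota})=v_{f_{\beta}(v)(\iota)}$ together with Lemma~\ref{inverse} one sees that $\beta^{-1}$ acts on the triple $\{v_{\phi},v_{\chi},v_{\psi}\}$ by exactly that same transposition. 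In particular $\{v_{\phi},v_{\chi},v_{\psi}\}$ is preserved setwise, so the two deletions still remove the two elements of this triple other than the one being contracted, while the contracted element $\beta^{-1}(v_{\phi})$ is $v_{\chi}$ if $v$ is unlooped and $v_{\psi}$ if $v$ is looped. Substituting produces the two cases of the conclusion.

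There is no genuine obstacle in this argument: the only detail requiring attention is tracking how $f_{\beta}(v)$ permutes the subscripts so that one reads off the correct element to contract. The value $f_{\beta}(w)$ at the other endpoint of the pivot is immaterial, because $\beta$ remains a bijection of the remaining ground set and the conclusion is only up to isomorphism; hence the relabeling of the columns associated with $w$ (and any other vertex) is absorbed harmlessly into the $\cong$.
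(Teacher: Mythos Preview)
Your argument is correct and is exactly the approach the paper indicates: apply Theorem~\ref{minor} to $G^{vw}$ and transport the resulting minor back through the compatible isomorphism of Corollary~\ref{pivinv}, reading off the permutation $f_{\beta}(v)$ to identify which element is contracted. The paper's one-line justification (``Combining Theorem~\ref{minor} with Theorem~\ref{lcinv} and Corollary~\ref{pivinv}'') covers both Corollaries~\ref{lcminor} and~\ref{pivminor} at once, with Theorem~\ref{lcinv} used only for the former; your write-up spells out the latter case in detail.
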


Note that $=$ appears in Corollary \ref{lcminor} because the compatible
isomorphism $\beta:M[IAS(G)]\rightarrow M[IAS(G_{ns}^{a})]$ of Theorem
\ref{lcinv} has $f_{\beta}(x)=1$ $\forall x\neq a$. In Corollary
\ref{pivminor} we write $\cong$ instead because the compatible isomorphism
$\beta:M[IAS(G)]\rightarrow M[IAS(G^{ab})]$ of Corollary \ref{pivinv} has
$f_{\beta}(b)\neq1$.

\begin{corollary}
Let $G$ be a looped simple graph, and let $M$ be a binary matroid. Then these
two statements are equivalent.

1. $M$ is isomorphic to the isotropic matroid of a graph obtained from $G$
through some sequence of local complementations, loop complementations and
vertex deletions.

2. $M$ is isomorphic to a minor of $M[IAS(G)]$ obtained by removing some
vertex triples, each vertex triple removed by contracting one element and
deleting the other two.
\end{corollary}

\begin{proof}
Recall that if $a$ is an isolated vertex of $G$ then the corresponding vertex
triple $\{a_{1},a_{2},a_{3}\}$ contains two components of $M[IAS(G)]$, a
singleton component containing a loop and a two-element component containing a
pair of parallel non-loops. The result of removing these three elements by
deletion and contraction is the same no matter which elements are deleted and
which are contracted. According to Proposition \ref{minor}, then,%
\[
(M[IAS(G)]/a_{1})-a_{2}-a_{3}=M[IAS(G-a)]
\]
no matter how the elements of the vertex triple are ordered.

Using the preceding observation for isolated vertices and Proposition
\ref{minor}, Corollary \ref{lcminor} and Corollary \ref{pivminor} for
non-isolated vertices, we deduce the equivalence asserted in the statement
from Theorem \ref{lcintro}.
\end{proof}

\subsection{The triangle property and strong maps}

Recall Definition \ref{subt}: a subtransversal of $W(G)$ is a subset that
contains no more than one element from each triple of the vertex
triangulation. The ranks of subtransversals in $M[IAS(G)]$ are connected to
each other through the \emph{triangle property}, which is part of Bouchet's
theory of isotropic systems \cite[Section 9]{Bi1}.

\begin{proposition}
\label{triangle}Suppose $r$ is the rank function of $M[IAS(G)]$, $S$ is a
subtransversal of $W(G)$ with $\left\vert S\right\vert =\left\vert
V(G)\right\vert -1$, and $v$ is the vertex of $G$ with $v_{\phi}$, $v_{\chi}$,
$v_{\psi}\notin S$. Let $S_{\phi}=S\cup\{v_{\phi}\}$, $S_{\chi}=S\cup
\{v_{\chi}\}$ and $S_{\psi}=S\cup\{v_{\psi}\}$. Then one of $S_{\phi}$,
$S_{\chi}$, $S_{\psi}$ has rank $r(S)$ in $M[IAS(G)]$, and the other two have
rank $r(S)+1$.
\end{proposition}

\begin{proof}
Complementing the loop status of $v$ has the effect of interchanging $v_{\chi
}$ and $v_{\psi}$, and this interchange does not affect the statement of the
proposition, so we may suppose without loss of generality that $v$ is looped.
Order the other vertices of $G$ as $v_{1},...,v_{n-1}$ in such a way that for
some $p\in\{1,...,n\}$, $v_{i\phi}\in S$ if and only if $i<p$. Then there is a
symmetric $(n-1-p)\times(n-1-p)$ matrix $B$ such that
\[
r(S_{\phi})=r%
\begin{pmatrix}
I & \ast & 0\\
0 & B & 0\\
0 & \rho & 1
\end{pmatrix}
\text{, }r(S_{\chi})=r%
\begin{pmatrix}
I & \ast & \ast\\
0 & B & \kappa\\
0 & \rho & 1
\end{pmatrix}
\text{ and }r(S_{\psi})=r%
\begin{pmatrix}
I & \ast & \ast\\
0 & B & \kappa\\
0 & \rho & 0
\end{pmatrix}
\text{.}%
\]
Here $r$ denotes the rank function of $M[IAS(G)]$ and also matrix rank over
$GF(2)$; $I$ is the $(p-1)\times(p-1)$ identity matrix; $\rho$ is the row
vector whose nonzero entries occur in columns such that $p\leq i\leq n-1$ and
$v_{i}$ neighbors $v$; $\kappa$ is the transpose of $\rho$; and $\ast$
indicates submatrices that do not contribute to the rank. Using elementary
column operations, we deduce that
\[
r(S_{\phi})=p+r(B)\text{, }r(S_{\chi})=p-1+r%
\begin{pmatrix}
B & \kappa\\
\rho & 1
\end{pmatrix}
\text{ and }r(S_{\psi})=p-1+r%
\begin{pmatrix}
B & \kappa\\
\rho & 0
\end{pmatrix}
\text{.}%
\]
A result mentioned by Balister, Bollob\'{a}s, Cutler, and Pebody \cite[Lemma
2]{BBCS} implies that two of the ranks $r(S_{\phi})$, $r(S_{\chi})$,
$r(S_{\psi})$ are the same, and the other is one less. As each of these ranks
is $r(S)$ or $r(S)+1$, the proposition follows.
\end{proof}

\begin{corollary}
\label{strongmap}Let $S$ and $T$ be disjoint transversals of $W(G)$, i.e.,
$S\cap T=\varnothing$ and $\left\vert S\cap\{v_{\phi},v_{\chi},v_{\psi
}\}\right\vert =1=\left\vert T\cap\{v_{\phi},v_{\chi},v_{\psi}\}\right\vert $
$\forall v\in V(G)$. For each $v\in V(G)$ let $v_{S}$ and $v_{T}$ be the
elements of $S\cap\{v_{\phi},v_{\chi},v_{\psi}\}$ and $T\cap\{v_{\phi}%
,v_{\chi},v_{\psi}\}$, respectively. Then the function $v_{S}\mapsto v_{T}$
defines a strong map from $M[IAS(G)]|S$ to $(M[IAS(G)]|T)^{\ast}$.
\end{corollary}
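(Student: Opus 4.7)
The plan is to verify the corollary via the closure characterization of strong maps: a bijection from $M_1$ to $M_2$ is a strong map if and only if $e\in\mathrm{cl}_{M_1}(X)$ forces $\pi(e)\in\mathrm{cl}_{M_2}(\pi(X))$ for every $X$ and every $e\notin X$. Writing $M = M(IAS(G))$, the $M_1$-closure here is just $\mathrm{cl}_M(X)\cap S$. A short calculation with the dual rank formula converts closure in $(M|T)^*$ into a statement about $M$-closure on the complementary side of $T$: for $Y\subseteq T$ and $f\in T-Y$,
\[
f\in\mathrm{cl}_{(M|T)^*}(Y)\iff f\notin\mathrm{cl}_M\bigl((T-Y)-\{f\}\bigr).
\]
So the goal becomes: show that $e\in\mathrm{cl}_M(X)$ implies $\pi(e)\notin\mathrm{cl}_M\bigl(T-\pi(X)-\{\pi(e)\}\bigr)$.

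Given such $X\subseteq S$ and $e\in S-X$, let $v\in V(G)$ be the vertex with $e=v_S$, and form the auxiliary set
\[
P=X\cup\{w_T:w\in V(G),\ w_S\notin X\cup\{e\}\}.
\]
Because $S$ and $T$ are disjoint transversals, $P$ is a sub-transversal of $W(G)$ that contains exactly one element of every canonical cell except the $v$-cell. Corollary \ref{closure} then tells us that $\mathrm{cl}_M(P)$ contains at most one of $v_\phi, v_\chi, v_\psi$.

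From here the argument proceeds by two elementary closure manipulations. First, monotonicity of closure together with $X\subseteq P$ gives $\mathrm{cl}_M(X)\subseteq\mathrm{cl}_M(P)$, so the hypothesis forces $v_S=e\in\mathrm{cl}_M(P)$; by Corollary \ref{closure}, $v_T$ must then lie outside $\mathrm{cl}_M(P)$. Second, the set $Y := \{w_T:w_S\notin X\cup\{e\}\} = T-\pi(X)-\{v_T\}$ is a subset of $P$, so $\mathrm{cl}_M(Y)\subseteq\mathrm{cl}_M(P)$ and hence $\pi(e)=v_T\notin\mathrm{cl}_M(Y)$. That is exactly the closure condition identified in the first paragraph, so $\pi$ is a strong map.

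The only step requiring any ingenuity is the choice of the sub-transversal $P$: one must splice the given subset $X\subseteq S$ with the $T$-copies of the vertices unused by $X\cup\{e\}$, so that precisely the $v$-cell is empty and Corollary \ref{closure} applies. Everything else is a short translation via the dual rank formula plus the monotonicity of closure.
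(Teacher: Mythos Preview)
Your proof is correct and follows essentially the same approach as the paper: the auxiliary set $P$ you construct is exactly the set $U=A_S\cup(T-A_T-\{v_T\})$ the paper uses, and both arguments apply Corollary~\ref{closure} to this subtransversal together with the same duality fact about closure in $(M|T)^*$. The only difference is cosmetic---you argue directly while the paper phrases it as a proof by contradiction.
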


\begin{proof}
For $A\subseteq V(G)$ let $A_{S}=\{a_{S}\mid a\in A\}$ and $A_{T}=\{a_{T}\mid
a\in A\}$. The assertion that $v_{S}\mapsto v_{T}$ defines a strong map is
equivalent to this claim: if $v\notin A$ and the closure of $A_{S}$ in
$M[IAS(G)]|S$ includes $v_{S}$, then the closure of $A_{T}$ in
$(M[IAS(G)]|T)^{\ast}$ includes $v_{T}$.

Suppose instead that the closure of $A_{S}$ in $M[IAS(G)]|S$ includes $v_{S}$,
and the closure of $A_{T}$ in $(M[IAS(G)]|T)^{\ast}$ does not include $v_{T}$.
A fundamental property of matroid duality is that the closure of $A_{T}$ in
$(M[IAS(G)]|T)^{\ast}$ does not include $v_{T}$ if and only if the closure of
$T-A_{T}-\{v_{T}\}$ in $M[IAS(G)]|T$ does include $v_{T}$. It follows that the
closure of $U=A_{S}\cup(T-A_{T}-\{v_{T}\})$ in $M[IAS(G)]$ includes both
$v_{S}$ and $v_{T}$. $U$ is a subtransversal, though, so Proposition
\ref{triangle} tells us that its closure cannot include both $v_{S}$ and
$v_{T}$. By contradiction, then, the claim must hold.
\end{proof}

Corollary \ref{strongmap} may seem to be a merely technical result, but it
generalizes one of the most famous situations in matroid theory. If $H$ and
$K$ are dual graphs in the plane then they give rise to disjoint transversals
$S$ and $T$ of $W(G)$, where $G$ is an interlacement graph of the medial graph
shared by $H$ and $K$. In this case the strong map $v_{S}\mapsto v_{T}$ is the
familiar isomorphism between the bond matroid of $H$ and the cycle matroid of
$K$. We refer to \cite{Tra} for more details of the significance of isotropic
matroids in the theory of 4-regular graphs.

\section{Interlace polynomials and Tutte polynomials}

Motivated by problems that arise in the study of DNA sequencing, Arratia,
Bollob\'{a}s and Sorkin introduced a one-variable graph polynomial, the
\emph{vertex-nullity interlace polynomial}, in \cite{A1}. In subsequent work
\cite{A2, A} they observed that this one-variable polynomial may be obtained
from the Tutte-Martin polynomial of isotropic systems studied by\ Bouchet
\cite{Bi3, B5}, introduced an extended two-variable version of the interlace
polynomial, and observed that the interlace polynomials are given by formulas
that involve the nullities of matrices over the two-element field, $GF(2)$.
Inspired by these ideas, Aigner and van der Holst \cite{AH}, Courcelle
\cite{C} and the author \cite{Tw, T6} introduced several different variations
on the interlace polynomial theme.

All these references share the underlying presumption that although the
interlace and Tutte-Martin polynomials are connected to other graph
polynomials in some ways, they are in a general sense separate invariants. In
this section we point out that in fact, the interlace polynomials of graphs
can be derived from parametrized Tutte polynomials of isotropic matroids.

One way to define the \emph{Tutte polynomial} of $M[IAS(G)]$ is a polynomial
in the variables $s$ and $z$, given by the subset expansion%
\[
t(M[IAS(G)])=\sum_{T\subseteq W(G)}s^{r^{G}(W(G))-r^{G}(T)}z^{\left\vert
T\right\vert -r^{G}(T)}\text{.}%
\]
Here $r^{G}$ denotes the rank function of $M[IAS(G)]$. We do not give a
general account of this famous invariant of graphs and matroids here; thorough
introductions may be found in \cite{Bo, BO, D, GM}.

Tutte polynomials of graphs and matroids are remarkable both for the amount of
structural information they contain and for the range of applications in which
they appear. Some applications (electrical circuits, knot theory, network
reliability, and statistical mechanics, for instance) involve graphs or
networks whose vertices or edges have special attributes of some kind --
impedances and resistances in circuits, crossing types in knot diagrams,
probabilities of failure and successful operation in reliability, bond
strengths in statistical mechanics. A natural way to think of these attributes
is to allow each element to carry two parameters, $a$ and $b$ say, with $a$
contributing to the terms of the Tutte polynomial corresponding to subsets
that include the given element, and $b$ contributing to the terms of the Tutte
polynomial corresponding to subsets that do not. Zaslavsky \cite{Z} calls the
resulting polynomial%
\begin{equation}
\sum_{T\subseteq W(G)}\left(
%TCIMACRO{\dprod \limits_{t\in T}}%
%BeginExpansion
{\displaystyle\prod\limits_{t\in T}}
%EndExpansion
a(t)\right)  \left(
%TCIMACRO{\dprod \limits_{w\notin T}}%
%BeginExpansion
{\displaystyle\prod\limits_{w\notin T}}
%EndExpansion
b(w)\right)  s^{r^{G}(W(G))-r^{G}(T)}z^{\left\vert T\right\vert -r^{G}(T)}
\label{partutte}%
\end{equation}
the \emph{parametrized rank polynomial} of $M[IAS(G)]$; we denote it
$\tau(M[IAS(G)])$.

We do not give a general account of the theory of parametrized Tutte
polynomials here; the interested reader is referred to the literature, for
instance \cite{BR, EMT, Sok, T, Z}. However it is worth taking a moment to
observe that parametrized polynomials are very flexible, and the same
information can be formulated in many ways.\ For instance if $s$ and the
parameter values $b(w)$ are all invertible then formula (\ref{partutte}) is
equivalent to
\[
s^{r^{G}(W(G))}\cdot\left(
%TCIMACRO{\dprod \limits_{w\in W(G)}}%
%BeginExpansion
{\displaystyle\prod\limits_{w\in W(G)}}
%EndExpansion
b(w)\right)  \cdot\sum_{T\subseteq W(G)}\left(
%TCIMACRO{\dprod \limits_{t\in T}}%
%BeginExpansion
{\displaystyle\prod\limits_{t\in T}}
%EndExpansion
\left(  \frac{a(t)}{b(t)s}\right)  \right)  (sz)^{\left\vert T\right\vert
-r^{G}(T)}\text{,}%
\]
which expresses $\tau(M[IAS(G)])$ as the product of a prefactor and a sum that
is essentially a parametrized rank polynomial with only $a$ parameters and one
variable, $sz$. We prefer formula (\ref{partutte}), though, because we do not
want to assume invertibility of the $b$ parameters.

Suppose that the various parameter values $a(w)$ and $b(w)$ are independent
indeterminates, and let $P$ denote the ring of polynomials with integer
coefficients in the $2+6\left\vert V(G)\right\vert $ independent
indeterminates $\{s,z\}\cup\{a(w)$, $b(w)\mid w\in W(G)\}$. Let $J$ be the
ideal of $P$ generated by the set of $4\left\vert V(G)\right\vert $ products
$\{a(v_{\phi})a(v_{\chi})$, $a(v_{\phi})a(v_{\psi})$, $a(v_{\chi})a(v_{\psi}%
)$, $b(v_{\phi})b(v_{\chi})b(v_{\psi})\mid v\in V(G)\}$, and let
$\pi:P\rightarrow P/J$ be the natural map onto the quotient. Then the only
summands of (\ref{partutte}) that make nonzero contributions to $\pi
\tau(M[IAS(G)])$ correspond to transversals of the vertex triangulation of
$W(G)$, i.e., subsets $T\subseteq W(G)$ with the property that $\left\vert
T\cap\{v_{\phi},v_{\chi},v_{\psi}\}\right\vert =1$ $\forall v\in V(G)$. We
denote the collection of all such transversals $\mathcal{T}(W(G))$. Each
$T\in\mathcal{T}(W(G))$ has $\left\vert T\right\vert =\left\vert
V(G)\right\vert =r^{G}(W(G))$, so $s$ and $z$ have the same exponent in the
corresponding term of (\ref{partutte}):
\[
\pi\tau(M[IAS(G)])=\pi\left(  \sum_{T\in\mathcal{T}(W(G))}\left(
%TCIMACRO{\dprod \limits_{t\in T}}%
%BeginExpansion
{\displaystyle\prod\limits_{t\in T}}
%EndExpansion
a(t)\right)  \left(
%TCIMACRO{\dprod \limits_{w\notin T}}%
%BeginExpansion
{\displaystyle\prod\limits_{w\notin T}}
%EndExpansion
b(w)\right)  (sz)^{\left\vert V(G)\right\vert -r^{G}(T)}\right)  \text{.}%
\]

Observe that $\pi$ is injective when restricted to the additive subgroup $A$
of $P$ generated by products%
\[
\left(
%TCIMACRO{\dprod \limits_{t\in T}}%
%BeginExpansion
{\displaystyle\prod\limits_{t\in T}}
%EndExpansion
a(t)\right)  \left(
%TCIMACRO{\dprod \limits_{w\notin T}}%
%BeginExpansion
{\displaystyle\prod\limits_{w\notin T}}
%EndExpansion
b(w)\right)  (sz)^{k}%
\]
where $k\geq0$ and $T\in\mathcal{T}(W(G))$. Consequently there is a
well-defined isomorphism of abelian groups $\pi^{-1}:\pi(A)\rightarrow A$, and
we have
\begin{equation}
\pi^{-1}\pi\tau(M[IAS(G)])=\sum_{T\in\mathcal{T}(W(G))}\left(
%TCIMACRO{\dprod \limits_{t\in T}}%
%BeginExpansion
{\displaystyle\prod\limits_{t\in T}}
%EndExpansion
a(t)\right)  \left(
%TCIMACRO{\dprod \limits_{w\notin T}}%
%BeginExpansion
{\displaystyle\prod\limits_{w\notin T}}
%EndExpansion
b(w)\right)  (sz)^{\left\vert V(G)\right\vert -r^{G}(T)}\text{.}
\label{partutte2}%
\end{equation}

Note that $\pi^{-1}\pi\tau(M[IAS(G)])$, the image of the parametrized Tutte
polynomial $\tau(M[IAS(G)])$ under the mappings $\pi$\ and $\pi^{-1}$, might
also be described as the \emph{section} of $\tau(M[IAS(G)])$ corresponding to
$\mathcal{T}(W(G))$. Either way, formula (\ref{partutte2}) describes an
element of $P$, where $s$, $z$ and the various parameter values $a(w)$, $b(w)$
are all independent indeterminates.

Arratia, Bollob\'{a}s and Sorkin \cite{A} define the two-variable
\emph{interlace polynomial }$q(G)$ by the formula%
\begin{align*}
q(G)  &  =\sum_{S\subseteq V(G)}\left(  x-1\right)  ^{r(A(G)[S])}\left(
y-1\right)  ^{\left\vert S\right\vert -r(A(G)[S])}\\
&  =\sum_{S\subseteq V(G)}\left(  \frac{y-1}{x-1}\right)  ^{\left\vert
S\right\vert -r(A(G)[S])}\left(  x-1\right)  ^{\left\vert S\right\vert
}\text{.}%
\end{align*}
Here $r(A(G)[S])$ denotes the $GF(2)$-rank of the principal submatrix of
$A(G)$ involving rows and columns corresponding to vertices from $S$.

Let $\mathcal{T}_{0}(W(G))=\{T\in\mathcal{T}(W(G))\mid v_{\psi}\notin T$
$\forall v\in V(G)\}$, and for $T\in\mathcal{T}(W(G))$ let $S(T)=\{v\in
V(G)\mid v_{\chi}\in T\}$. Then $T\mapsto S(T)$ defines a bijection from
$\mathcal{T}_{0}(W(G))$ onto the power set of $V(G)$. As $r^{G}(T)$ is the
$GF(2)$-rank of the matrix%
\[
\left(  \text{columns }v_{\phi}\text{ with }v\notin S(T)\mid\text{columns
}v_{\chi}\text{ with }v\in S(T)\right)
\]
and the columns $v_{\phi}$ are columns of the identity matrix,
\[
r^{G}(T)=\left\vert V(G)\right\vert -\left\vert S(T)\right\vert
+r(A(G)[S(T)]).
\]
It follows that $q(G)$ may be obtained from $\pi^{-1}\pi\tau(M[IAS(G)])$ by
setting $a(v_{\phi})\equiv1$, $a(v_{\chi})\equiv x-1$, $a(v_{\psi})\equiv0$,
$b(v_{\phi})\equiv1$, $b(v_{\chi})\equiv1$, $b(v_{\psi})\equiv1$, $s=y-1$ and
$z=1/(x-1)$. These assignments are not unique; for instance the values of $s$
and $z$ may be replaced by $s=(y-1)/u$ and $z=u/(x-1)$ for any invertible $u$.

The reader familiar with the Tutte-Martin polynomials of isotropic systems
studied by\ Bouchet \cite{Bi3, B5} and the interlace polynomials introduced by
Aigner and van der Holst \cite{AH}, Courcelle \cite{C}, and the author
\cite{Tw, T6} will have no trouble showing that appropriate values for $s$,
$z$ and the $a$ and $b$ parameters yield all of these polynomials from the
parametrized rank polynomial $\tau(M[IAS(G)])$.

\bigskip

\end{document}